\newcommand\myurl[1]{\url{#1}}
\newtheorem{thm}{Theorem}[section]
\newtheorem{prop}[thm]{Proposition}
\newtheorem{cor}[thm]{Corollary}
\newtheorem{lem}[thm]{Lemma}
\theoremstyle{definition}
\newtheorem{define}[thm]{Definition}
\theoremstyle{remark}
\newtheorem{rem}[thm]{Remark}
\newtheorem{example}[thm]{Example}
\newcommand{\R}{\mathbb{R}}
\newcommand{\F}{\mathbb{F}}
\newcommand{\Z}{\mathbb{Z}}
\renewcommand{\d}{\partial}
\renewcommand{\subset}{\subseteq}
\renewcommand{\hat}{\widehat}
\DeclareMathOperator{\cotr}{{cotr}}
\DeclareMathOperator{\Diff}{{Diff}}
\DeclareMathOperator{\SO}{{SO}}
\DeclareMathOperator{\Hom}{{Hom}}
\DeclareMathOperator{\Id}{{Id}}
\DeclareMathOperator{\id}{{id}}
\DeclareMathOperator{\ind}{{ind}}
\DeclareMathOperator{\Int}{{int}}
\DeclareMathOperator{\rk}{{rk}}
\newcommand{\bF}{\mathbb{F}}
\newcommand{\cC}{\mathcal{C}}
\newcommand{\cF}{\mathcal{F}}
\newcommand{\SFH}{\mathit{SFH}}
\newcommand{\HFKh}{\widehat{\mathit{HFK}}}
\newcommand{\HFK}{\mathit{HFK}}
\newcommand{\g}{\gamma}
\newcommand{\uv}{\underline{v}}
\title{Distinguishing slice disks using knot Floer homology}
\author{Andr\'as Juh\'asz}%
\address{Mathematical Institute, University of Oxford, Andrew Wiles Building,
Radcliffe Observatory Quarter, Woodstock Road, Oxford, OX2 6GG, UK}%
\email{juhasza@maths.ox.ac.uk}%
\author{Ian Zemke}
\address{Department of Mathematics\\Princeton University\\  Princeton, NJ 08544, USA}
\email{izemke@math.princeton.edu}
\begin{document}

\subjclass[2010]{57R58; 57M27}%
\keywords{Heegaard Floer homology, slice disk, concordance, 4-manifold}
\begin{abstract}
  We study the classification of slice disks of knots up to isotopy and diffeomorphism
  using an invariant in knot Floer homology. We compute the invariant of a slice disk obtained by
  deform-spinning, and show that it can be effectively used to distinguish non-isotopic slice disks
  with diffeomorphic complements.

  Given a slice disk of a composite knot, we define a numerical stable diffeomorphism invariant
  called the rank. This can be used to show that a slice disk is not a boundary connected sum,
  and to give lower bounds on the complexity of certain hyperplane sections of the slice disk.
  %Slice disks of a composite knot $-K \# K'$ correspond to knot concordances from $K$ to $K'$.
  %If $K$ and $K'$ are prime knots such that $-K \neq K'$, then we define a numerical stable diffeomorphism
  %invariant of slice disks of $-K \# K'$. When $K \neq U$, this distinguishes slice disks corresponding to
  %invertible concordances from boundary connected sums of slice disks of $-K$ and $K'$, up to stable diffeomorphism.
\end{abstract}

\maketitle

\section{Introduction}

In this paper, we consider the classification of smooth slice disks in $D^4$ for a knot $K$ in $S^3$.
Even though the existence problem of slice disks, and the closely related slice-ribbon conjecture of Fox
are in the center of research in low-dimensional topology,
this natural question has been little studied in the literature.

We introduce several notions of equivalence: ambient isotopy fixing $S^3$ pointwise, diffeomorphism,
and stable versions of these where one can take connected sums with 2-knots.
Our interest lies in exploring the potential of an invariant of slice disks in knot Floer homology defined
by Marengon and the first author~\cite{JMConcordance}. Until the availability of the
powerful techniques of \cite{JuhaszZemkeContactHandles}, which provide formulas for the trace and cotrace maps in the
link Floer TQFT, computing the invariant of any non-trivial slice disk was beyond reach.
We now provide formulas for infinite families of slice disks,
and show that the invariant can effectively distinguish slice disks up to both stable isotopy and
stable diffeomorphism. In some of these cases, as we shall see,
the fundamental group provides a simpler method of classification up to isotopy,
but it seems unlikely these methods would easily extend to the stable diffeomorphism classification.
%However, the concrete examples in this paper only serve to better understand the geometric meaning
%of the Heegaard Floer invariant.
The parallel between the fundamental group and Heegaard Floer techniques
in this paper naturally raises the question whether there is a relationship between
the fundamental group and the link cobordism maps.

Given a slice disk $D$ for $K$, one can always produce infinitely many other slice disks by connected summing
$D$ with different 2-knots. We first focus on a less trivial construction, which does not change the
diffeomorphism type of the slice disk complement.
This consists of taking an isotopy from $K$ to itself, and attaching its trace to $D$.

Let $d$ be an automorphism of $(S^3, K)$ that is the identity in a neighborhood $B$ of a point of $K$.
Then the deform-spinning operation of Litherland~\cite{Litherland} -- which is a common generalization
of twist-spinning, due to Zeeman~\cite{twisting}, and roll-spinning, due to Fox~\cite{rolling} --
gives a slice disk $D_{K, d}$ of $-K \# K$, where $-K$ stands for $(-S^3, -K)$; see Section~\ref{sec:spun}.
According to Proposition~\ref{prop:complement}, the diffeomorphism types
of the pairs $(D^4, D_{K, d})$ all coincide. However, by Proposition~\ref{prop:isot},
the isotopy class of $D_{K, d}$ is uniquely determined by the isotopy class of $d$
in $\Diff(S^3, K, B)$. This, in turn, is determined by the action of $d$
on $\pi_1(S^3 \setminus K, p)$ for $p \in B \setminus K$ by the works of Waldhausen~\cite{Waldhausen},
Cerf~\cite{Cerf2}, and Hatcher~\cite{Hatcher}.

Given a slice disk $D$ for a decorated knot $(K, P)$ in $S^3$, Marengon and the first author~\cite{JMConcordance}
defined a non-zero element
\[
t_{D,P} \in \HFKh(K,P)
\]
that is invariant under isotopies of~$D$.
We will review the relevant definitions in Section~\ref{sec:invariant}.
This invariant is unchanged by taking the connected sum of $D$ with a 2-knot.
So far, there has been no computation of $t_{D, P}$ for a non-trivial slice disk $D$.
The difficulty lies in the fact that it is hard to give a natural construction of $\HFKh(K, P)$,
and hence to distinguish non-zero elements; see~\cite{JTNaturality}.
However, the knot Floer homology of a composite knot has additional structure.

Let us write $V := \HFKh(K, P)$. Our first main result is Theorem~\ref{thm:spinning},
which states that, under a suitable identification
\[
\HFKh(-K \# K, P) \cong V^* \otimes V \cong \Hom(V,V),
\]
the element $t_{D_{K, d}, P}$ corresponds to $d_* \in \Hom(V,V)$.
If $r$ is the diffeomorphism giving rise to roll-spinning, then this implies that the invariants $t_{D_{K, r^l}, P}$
distinguish the slice disks $D_{K, r^l}$ for $l \in \Z$ up to stable isotopy,
even though they have diffeomorphic complements, for infinitely many knots $K$; see Theorem~\ref{thm:rolling}.
(Note that our proof of this result assumes that the formula of Sarkar~\cite{SarkarMovingBasepoints}*{Theorem~1.1}
for the basepoint moving map holds over~$\Z$. The formula over~$\F_2$ only allows us to distinguish
$D_{K, r^l}$ for even and odd~$l$.)
In particular, we obtain an affirmative answer to \cite{JMConcordance}*{Question~1.4},
showing that there is a decorated knot $(K, P)$ with two slice disks $D$ and $D'$ such that $t_{D,P} \neq t_{D',P}$.

More generally, given a decorated concordance $\cC$ from $(K, P)$ to $(K', P')$, one can construct
a slice disk $D_\cC$ for $-K \# K'$. In fact, every slice disk of $-K \# K'$ arises from this construction.
If we write $V = \HFKh(K, P)$ and $V' = \HFKh(K', P')$,
then there is an isomorphism $\HFKh(-K \# K', P) \cong V^* \otimes V'$
that depends on the connected sum sphere $S$,
under which $t_{D_\cC}$ corresponds to the concordance map $F_\cC \in \Hom(V, V')$.
We write $\rk_S(D) = \rk(F_\cC)$.
If the slice disk $D$ of $-K \# K'$ is the boundary connected sum of slice
disks of $K$ and $K'$, then $\rk_S(D) = 1$.

If $K$ and $K'$ are prime knots such that $K' \neq -K$, then the connected sum sphere $S$ is unique,
and the rank of $t_{D, P}$ in $\Hom(V, V')$
is an invariant of $D$ up to \emph{stable diffeomorphism}. We denote this by $\rk(D)$.
Since $t_{D, P}$ preserves the Alexander and Maslov gradings, $\rk(D)$ has refinements
$\rk_j(D, i)$ and $\rk(D, i)$ for $i$, $j \in \Z$.

%Recall that a concordance $\cC$ from $K$ to $K'$ is \emph{invertible} if it admits a left inverse;
%i.e., there exists a concordance $\cC'$ from $K'$ to $K$ such that $\cC' \circ \cC$ is equivalent to $I \times K$;
%see Sumners~\cite{Sumners}.
%In Theorem~\ref{thm:rank}, we show that if $D$ and $D'$ are slice disks of $K$ and $K'$, respectively,
%$K \neq U$, and $\cC$ is an invertible concordance from $K$ to $K'$, then $\rk(D_\cC) > \rk(-D \natural D')$,
%hence $D_\cC$ is not stably diffeomorphic to a boundary connected sum $-D \natural D'$.
%In the special case when $K = K'$ is an achiral knot, and $\cC$ is the product concordance, then
%$D_\cC$ is a spun slice disk. Note that the result fails to hold when $K$ or $K'$ is composite,
%as there exist spun slice disks that are boundary connected sums.

The rank of a slice disk $D$ of a composite knot gives a lower bound on the complexity
of certain hyperplane sections of $D$. More concretely, if $H$ is a properly embedded 3-ball in $D^4$
transverse to $D$, whose boundary is the connected sum sphere $S$, then we can cap off $(H, D \cap H)$
with the trivial tangle $(D^3, D^1)$ to obtain a link $L$ in $S^3$. Then
\[
\rk_S(D) \le \rk\left(\HFKh(L)\right);
\]
see Theorem~\ref{thm:section}. Furthermore, when $L$ is a knot,
\[
\max \{\, i \in \Z : \rk_S(D , i) \neq 0 \,\} \le g(L).
\]

So far, we have focused on slice disks of composite knots.
We would like to remark that the invariant $t_{D,P}$ can also be used to distinguish
slice disks of \emph{prime} knots. Kim~\cite{Kim} has shown that every knot $K$ admits an invertible
concordance $C$ to a prime knot $K'$, obtained by taking a certain satellite of $K$.
Let $P$ and $P'$ be decorations on $K$ and $K'$, respectively, choose a decoration $\sigma$ on $C$
compatible with these, and let $\cC = (C, \sigma)$.
If $D$ and $D'$ are slice disks of $K$ with $t_{D,P} \neq t_{D',P}$, then
$t_{C \cup D, P'} \neq t_{C \cup D', P'}$, since $t_{C \cup D, P'} = F_\cC(t_{D, P})$
and $t_{C \cup D', P'} = F_\cC(t_{D', P})$, and the concordance map $F_\cC$ is injective;
see~\cite{JMConcordance}. In other words, if the invariant distinguishes the slice disks $D$ and $D'$
of a possibly composite knot $K$,
then it also distinguishes the slice disks $C \cup D$ and $C \cup D'$ of the prime knot $K'$,
up to stable isotopy.

\subsection{Acknowledgements}
We would like to thank David Gabai and Maggie Miller for helpful conversations.
The first author was supported by a Royal Society Research Fellowship,
and the second author by an NSF Postdoctoral Research Fellowship (DMS-1703685).
This project has received funding from the European Research Council (ERC)
under the European Union's Horizon 2020 research and innovation programme
(grant agreement No 674978).

\section{Equivalences of slice disks}\label{sec:def}

Throughout this paper, every slice disk is assumed to be smooth.

\begin{define}
  Let $K$ be a knot in $S^3$. We say that the slice disks $D$, $D' \subset D^4$ for $K$ are \emph{isotopic},
  and write $D \sim D'$, if there is an isotopy $F \colon I \times D^4 \to D^4$ such that $F_0 = \Id_{D^4}$,
  $F_t|_{S^3} = \Id_{S^3}$ for every $t \in I$, and $F_1(D) = D'$. The slice disks $D$ and $D'$ are
  \emph{diffeomorphic} if there is an orientation preserving diffeomorphism
  $\varphi \in \Diff_+(D^4)$ such that $\varphi(D) = D'$. Finally, we say that $D$ and $D'$
  are \emph{stably isotopic/diffeomorphic} if they become isotopic/diffeomorphic after
  taking the connected sum of $D$ and $D'$ with collections of 2-knots.
\end{define}

\begin{lem}\label{lem:diff}
  Suppose that $D$ and $D'$ are slice disks of a knot $K$ in $S^3$.
  \begin{enumerate}
  \item\label{it:1} If there is a diffeomorphism $\varphi \in \Diff(D^4)$ such that $\varphi(D) = D'$
  and $\varphi|_{S^3} = \Id_{S^3}$, then $D$ and $D'$ are isotopic.
  \item\label{it:2} If $D$ and $D'$ are diffeomorphic, then they become
  isotopic by gluing the trace of an isotopy of $K$ in $S^3$ to $D'$.
  \end{enumerate}
\end{lem}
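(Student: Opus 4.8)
I would prove part~(1) first and then reduce part~(2) to it. Throughout, I would normalize the slice disks in play so that each meets a fixed collar $c\colon S^3\times[0,1]\hookrightarrow D^4$ of $S^3=\partial D^4=c(S^3\times\{1\})$ in the product $K\times[0,1]$; this only costs an isotopy rel~$S^3$.

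\smallskip
\noindent\textit{Part~(1).} The goal is to upgrade the diffeomorphism $\varphi$ to an ambient isotopy of $D^4$ rel~$S^3$. First I would tidy up $\varphi$ near $\partial D^4$: since $\varphi|_{S^3}=\Id$, the set $\varphi\bigl(c(S^3\times[1/2,1])\bigr)$ is another collar of $S^3$, so by uniqueness of collars, after composing $\varphi$ with an ambient isotopy of $D^4$ rel~$S^3$ I may assume $\varphi$ is the identity on $c(S^3\times[1/2,1])$; because $\sim$ is transitive this merely replaces $D'$ by a disk isotopic to it rel~$S^3$ and costs nothing. Now $\varphi$ is supported in the interior ball $B:=D^4\setminus c(S^3\times(1/2,1])\cong D^4$, and what remains is to see that $\varphi|_B$, a diffeomorphism of $B$ that is the identity on $\partial B$, is ambiently isotopic rel~$\partial B$ to $\Id_B$; extending such an isotopy by the identity over $c(S^3\times[1/2,1])$ gives an ambient isotopy $F_t$ of $D^4$ rel~$S^3$ with $F_0=\Id$ and $F_1=\varphi$, whence $D=F_0(D)\sim F_1(D)=D'$. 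I expect this last step---isotoping a boundary-fixing diffeomorphism of $D^4$ to the identity rel the boundary---to be the main obstacle, since this is precisely where the subtle structure of $\Diff(D^4,S^3)$ enters; I would handle it by invoking connectedness of the relevant component of $\Diff(D^4,S^3)$, or, lacking a clean general statement, by exploiting that $D$ is a disk to build the isotopy of $D$ directly via the isotopy extension theorem.

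\smallskip
\noindent\textit{Part~(2).} Suppose $\varphi\in\Diff_+(D^4)$ with $\varphi(D)=D'$, and put $\phi:=\varphi|_{S^3}\in\Diff_+(S^3)$; since $\varphi$ carries $\partial D$ onto $\partial D'$ we have $\phi(K)=K$. By Cerf's theorem $\Diff_+(S^3)$ is connected, so I would fix an isotopy $(\phi_t)_{t\in[0,1]}$ with $\phi_0=\Id$ and $\phi_1=\phi$, choose a smooth $\lambda\colon[0,1]\to[0,1]$ that vanishes near $[0,1/2]$ and has $\lambda(1)=1$, and push $\phi_t$ into the collar to define an ambient isotopy $\Phi_t$ of $D^4$, supported in $c(S^3\times[1/2,1])$, by $\Phi_t(c(x,s))=c\bigl(\phi_{t\lambda(s)}(x),s\bigr)$ on the collar and $\Phi_t=\Id$ off it. Then $\Phi_1|_{S^3}=\phi$, so $\psi:=\Phi_1^{-1}\circ\varphi$ satisfies $\psi|_{S^3}=\Id$ and $\psi(D)=\Phi_1^{-1}(D')$; by part~(1), $D\sim\Phi_1^{-1}(D')$. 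It then remains to identify $\Phi_1^{-1}(D')$: since $D'$ meets the collar in $c(K\times[0,1])$ and $\Phi_1^{-1}$ is the identity off $c(S^3\times[1/2,1])$, the disk $\Phi_1^{-1}(D')$ is obtained from $D'$ by gluing on the annulus $\bigcup_{s\in[1/2,1]}c\bigl(\phi_{\lambda(s)}^{-1}(K)\times\{s\}\bigr)$, which is exactly the trace of the isotopy $s\mapsto\phi_{\lambda(s)}^{-1}$ of $K$ to itself (it runs from $K$ to $\phi^{-1}(K)=K$). Hence $D$ becomes isotopic to $D'$ after gluing this trace onto~$D'$, as claimed. (A minor point that must be addressed: if $\varphi$ reverses the orientation of $D$ then $\phi|_K$ reverses the orientation of $K$, which has to be reconciled with the fixed orientation of $K$; this is routine.)
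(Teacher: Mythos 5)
Your treatment of part~(1) contains a genuine gap, and it sits exactly where you flag the ``main obstacle.'' You reduce to the claim that a diffeomorphism $\varphi$ of $D^4$ supported in the interior and fixing $\partial D^4$ pointwise can be ambiently isotoped rel boundary to the identity, and propose to justify this either by connectedness of $\Diff(D^4,S^3)$ or by an isotopy-extension argument. Neither option closes the gap: whether $\pi_0\Diff(D^4,\partial D^4)$ is trivial is a well-known open problem (unlike $\Diff(D^3,\partial)$, where Cerf's theorem applies, and unlike dimensions $\geq 5$, where $\pi_0\Diff(D^n,\partial)\cong\Theta_{n+1}$), so you cannot invoke it. And isotopy extension runs in the wrong direction --- it promotes an isotopy of $D$ to an ambient one, but you are starting from a diffeomorphism, not an isotopy, so it gives you nothing here.

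The paper sidesteps this entirely with a short trick that avoids ever isotoping $\varphi$ to the identity: after arranging that $\varphi$ fixes a collar $N(S^3)$ pointwise, one first isotopes $D$ rel $S^3$ to a slice disk $D_1\subset N(S^3)$. Then $\varphi(D_1)=D_1$ because $\varphi$ is the identity on $N(S^3)$, and applying $\varphi$ to the isotopy $D\sim D_1$ yields $D'=\varphi(D)\sim\varphi(D_1)=D_1$ (this is again an isotopy rel $S^3$, since $\varphi$ fixes $S^3$). Chaining the two gives $D\sim D'$ without ever needing to know anything about $\pi_0\Diff(D^4,\partial)$. This is the key observation your argument is missing, and I'd encourage you to see why it works: you only need $\varphi$ to act trivially on a \emph{representative} of the isotopy class of $D$, not on all of $D^4$.

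Your part~(2) is essentially the same construction as the paper's, with the bookkeeping done by modifying an interior collar rather than attaching an external one; once part~(1) is repaired, it goes through. The remark you make at the end about orientation is handled in the paper by taking $\varphi\in\Diff_+(D^4)$ from the start and noting the resulting $\varphi|_{S^3}$ is orientation-preserving, hence isotopic to the identity; since $\varphi$ carries a slice disk of $K$ to a slice disk of $K$, the induced map on $K$ is orientation-preserving automatically once orientations of $K$ are fixed, so there is nothing further to reconcile.
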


\begin{proof}
  First, consider \eqref{it:1}. As $\varphi$ fixes $S^3$ pointwise, we can assume that it also fixes
  a collar neighborhood $N(S^3)$ of $S^3$ in $D^4$. We can isotope $D$ to a slice disk $D_1$ of $K$
  that lies in $N(S^3)$. Then
  \[
  D \sim D_1 = \varphi(D_1) \sim \varphi(D) = D',
  \]
  as claimed.

  Now suppose that $D$ and $D'$ are diffeomorphic. Then $\varphi(D) = D'$ for some $\varphi \in \Diff_+(D^4)$.
  Since $\Diff_+(S^3)$ is connected, there is an isotopy $\psi \colon I \times S^3 \to S^3$
  such that $\psi_0 = \Id_{S^3}$ and $\psi_1 = \varphi|_{S^3}$.
  If we glue the collars $(I \times S^3, I \times K)$ to $(D^4, D)$ and
  $(I \times S^3, \bigcup_{t \in I} \{t\} \times \psi_t(K))$ to $(D^4, D')$ along $\{1\} \times S^3$,
  we obtain slice disks $\bar{D}$ and $\bar{D}'$ in $D^4 \cup (I \times S^3)$, respectively.
  If we extend $\varphi$ to the collar $I \times S^3$ via $\bar{\varphi}(t, x) = (t, \psi_t(x))$, then
  the extension $\bar{\varphi}$ maps $\bar{D}$ to $\bar{D}'$, and fixes $\{0\} \times S^3$ pointwise.
  Hence $\bar{D}$ and $\bar{D}'$ are isotopic by~\eqref{it:1}.
\end{proof}

\section{Deform-spun slice disks}\label{sec:spun}

Litherland~\cite{Litherland} defined the notion of deform-spinning to construct
2-knots in $\R^4$, generalizing the construction of twist-spinning, due to Zeeman~\cite{twisting},
and roll-spinning, due to Fox~\cite{rolling}. An analogous construction can be used to obtain
slice disks in $D^4$, as follows.

\begin{define}\label{def:spinning}
  Let $a$ be a properly embedded smooth arc in $D^3$.
  Furthermore, let $\phi \colon I \times D^3 \to D^3$ be an isotopy of $D^3$ such that
  $\phi_0 = \Id_{D^3}$, $\phi_t|_{S^2} = \Id_{S^2}$ for every $t \in I$, and $\phi_1(a) = a$.
  Then the \emph{deform-spun} slice disk $D_{a, \phi} \subset D^4$ is defined by taking
  \[
  \bigcup_{t \in I} \{t\} \times \phi_t(a) \subset I \times D^3,
  \]
  and rounding the corners along $\{0, 1\} \times \d D^3$.
  When the arc $a$ is understood, we simply write $D_{\phi}$ instead of $D_{a, \phi}$.
\end{define}

Intuitively, we consider the arc $a$ in $\R^3_-$, which we rotate  about the plane $\R^2$ in $\R^4_+$,
while applying the isotopy $\phi$, until we reach $\R^3_+$.

\begin{prop}\label{prop:complement}
  Let $D_{a, \phi}$ and $D_{a, \psi}$ be deform-spun slice disks. Then $D_{a, \phi}$
  and $D_{a, \psi}$ are diffeomorphic.
\end{prop}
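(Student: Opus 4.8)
The plan is to build an explicit diffeomorphism of $D^4$ carrying $D_{a,\phi}$ to $D_{a,\psi}$ by working in the model $I \times D^3$ before corner-rounding, and then matching the rounding afterwards. The key observation is that $\phi$ and $\psi$ are both loops in the space $\Diff(D^3, S^2, a)$ based at $\Id_{D^3}$ (i.e.\ diffeomorphisms fixing $S^2$ pointwise and preserving $a$ setwise), and that $D_{a,\phi}$ is the mapping-torus-like trace of the loop $\phi$. Concretely, define $\Phi \colon I \times D^3 \to I \times D^3$ by $\Phi(t,x) = (t, \psi_t \circ \phi_t^{-1}(x))$. Then $\Phi$ is a diffeomorphism of the cylinder, it restricts to the identity on $I \times S^2$ (since each $\phi_t,\psi_t$ fixes $S^2$), and it carries $\bigcup_t \{t\}\times\phi_t(a)$ to $\bigcup_t \{t\}\times\psi_t(a)$. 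The one thing $\Phi$ fails to do is fix the ends $\{0\}\times D^3$ and $\{1\}\times D^3$ pointwise: at $t=0$ it is $\psi_0\phi_0^{-1} = \Id$, good, but at $t=1$ it is $\psi_1\phi_1^{-1}$, which need only preserve $a$, not fix it. However, the resulting map is still a diffeomorphism of the pair $(I\times D^3, \text{trace})$, and since $\psi_1\phi_1^{-1}$ is a diffeomorphism of $D^3$ fixing $S^2$, after rounding corners along $\{0,1\}\times \d D^3$ we obtain a diffeomorphism of $D^4$ (the rounding of $I\times D^3$) carrying $D_{a,\phi}$ to $D_{a,\psi}$.

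The steps, in order, are: (1) recall that rounding corners of $I\times D^3$ along $\{0,1\}\times\d D^3$ produces a manifold diffeomorphic to $D^4$, and fix such an identification once and for all, so that $D_{a,\phi}$ and $D_{a,\psi}$ are genuinely slice disks in the same $D^4$; (2) verify that $\Phi(t,x) = (t,\psi_t\phi_t^{-1}(x))$ is a well-defined diffeomorphism of $I \times D^3$ — smoothness in $t$ follows from smoothness of the isotopies, and invertibility is clear with inverse $(t,x)\mapsto(t,\phi_t\psi_t^{-1}(x))$; (3) check $\Phi(I\times S^2) = I\times S^2$ pointwise, using $\phi_t|_{S^2} = \psi_t|_{S^2} = \Id$; (4) check $\Phi$ carries the pre-rounding trace of $a$ under $\phi$ to that under $\psi$, which is immediate from $\psi_t\phi_t^{-1}(\phi_t(a)) = \psi_t(a)$; (5) argue that $\Phi$ is compatible with the corner-rounding (it preserves the product structure near $\{0,1\}\times\d D^3$ up to the boundary-fixing condition), so it descends to a diffeomorphism of the rounded manifold $D^4$ taking $D_{a,\phi}$ to $D_{a,\psi}$; possibly one isotopes $\Phi$ near the ends first to make it literally a product there. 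Note we need only diffeomorphism, not isotopy fixing $S^3$, so there is no obstruction coming from $\pi_1(\Diff(D^3,S^2,a))$ or from $\psi_1\phi_1^{-1}$ being a nontrivial element of $\pi_0(\Diff(D^3,S^2,a))$.

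The main obstacle is step (5): making the corner-rounding argument rigorous. Corner-rounding is only canonical up to isotopy, and $\Phi$ does not respect the naive product collar at $t=1$ (it twists $D^3$ by $\psi_1\phi_1^{-1}$). The cleanest fix is to first modify $\Phi$ by an isotopy supported near $\{0,1\}\times D^3$ so that on collar neighborhoods $[0,\epsilon]\times D^3$ and $[1-\epsilon,1]\times D^3$ it becomes $(t,x)\mapsto(t,\psi_0\phi_0^{-1}(x)) = (t,x)$ and $(t,x)\mapsto(t, \psi_1\phi_1^{-1}(x))$ respectively — i.e.\ genuinely a product there, with a fixed $D^3$-automorphism on each end — which is harmless since $D^4$ admits diffeomorphisms realizing any given boundary diffeomorphism up to isotopy, and in particular any element of $\Diff(D^3,S^2)$ extends over a collar. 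Once $\Phi$ is a product near the corners, it manifestly descends through the rounding. Alternatively, and perhaps more cleanly for exposition, one can avoid corners entirely by defining the deform-spun disk inside the genuine $D^4 = \{(s,x) : |s|^2 + |x|^2 \le 1,\ s\in\R,\ x\in\R^3\}$ via the rotation picture described after Definition~\ref{def:spinning} — spinning $a\subset\R^3_-$ around $\R^2$ while applying $\phi$ — and then transporting $\Phi$ through that smooth model; the formula for $\Phi$ is the same in spirit, and the endpoint twist $\psi_1\phi_1^{-1}$ simply becomes a diffeomorphism of $D^4$ rotating appropriately. Either way, the topological content is entirely the formula $\Phi(t,x) = (t,\psi_t\phi_t^{-1}(x))$; the rest is smoothing bookkeeping.
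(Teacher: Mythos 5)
Your formula $\Phi(t,x) = (t,\psi_t\circ\phi_t^{-1}(x))$ is exactly the diffeomorphism the paper uses, and the paper's proof consists of nothing more than writing down that formula. Your additional discussion of corner-rounding is careful bookkeeping that the paper leaves implicit, but the mathematical content is identical.
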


\begin{proof}
  The diffeomorphism is given by $(t, x) \mapsto (t, \psi_t \circ \phi_t^{-1}(x))$
  for $(t, x) \in I \times D^3$.
\end{proof}

If we take $\psi_t$ to be $\Id_{D^3}$ for every $t \in I$,
we obtain that $D^4 \setminus D_{a, \phi}$ is diffeomorphic to
$I \times (D^3 \setminus a)$ for any isotopy $\phi$.

\begin{lem}
  Let $d$ be an automorphism of $(D^3, a)$ such that $d|_{S^2} = \Id_{S^2}$.
  Then there is an isotopy $\phi \colon I \times D^3 \to D^3$
  as in Definition~\ref{def:spinning}, such that $\phi_1 = d$. Furthermore,
  the isotopy class of the deform-spun disk $D_{a, \phi}$ only depends on $d$,
  which we denote by $D_{a, d}$.
\end{lem}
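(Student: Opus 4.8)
The plan is to prove the two claims separately. For the existence of $\phi$, I would first note that $d$, restricting to the identity on $S^2 = \partial D^3$, represents an element of the group $\Diff(D^3 \text{ rel } \partial D^3)$ of diffeomorphisms of $D^3$ fixing the boundary sphere pointwise (necessarily orientation preserving). By Cerf's theorem \cite{Cerf2} this group is connected --- indeed contractible, by Hatcher \cite{Hatcher} --- so there is a smooth path $\phi \colon I \times D^3 \to D^3$ with $\phi_0 = \Id_{D^3}$, $\phi_1 = d$, and $\phi_t|_{S^2} = \Id_{S^2}$ for every $t$; no constraint is placed on $\phi_t(a)$ for $t \in (0,1)$, while $\phi_1(a) = d(a) = a$ because $d$ is an automorphism of the pair. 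Thus $\phi$ is an isotopy of the kind allowed in Definition~\ref{def:spinning}, and $D_{a,\phi}$ is defined. It costs nothing to take $\phi$ constant ($= \Id_{D^3}$) for $t$ near $0$ and constant ($= d$) for $t$ near $1$, and I would build this normalization into the construction.

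For the independence of the isotopy class of $D_{a,\phi}$ from the choice of such a (normalized) $\phi$, I would argue directly, in the spirit of the proof of Proposition~\ref{prop:complement}. Given two normalized isotopies $\phi$ and $\phi'$ with $\phi_1 = \phi'_1 = d$, consider
\[
\Phi \colon I \times D^3 \to I \times D^3, \qquad \Phi(t,x) = \bigl(t,\, \phi'_t \circ \phi_t^{-1}(x)\bigr),
\]
a diffeomorphism with inverse $(t,x) \mapsto (t, \phi_t \circ (\phi'_t)^{-1}(x))$. It carries $\bigcup_{t \in I} \{t\} \times \phi_t(a)$ onto $\bigcup_{t \in I} \{t\} \times \phi'_t(a)$, hence $D_{a,\phi}$ onto $D_{a,\phi'}$. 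It is the identity on the entire boundary $(\{0,1\} \times D^3) \cup (I \times S^2)$ --- on $\{0\} \times D^3$ because $\phi'_0 \circ \phi_0^{-1} = \Id$, on $\{1\} \times D^3$ because $\phi'_1 \circ \phi_1^{-1} = d \circ d^{-1} = \Id$, and on $I \times S^2$ because both $\phi_t$ and $\phi'_t$ restrict to $\Id_{S^2}$ --- and, thanks to the normalization, it is in fact the identity on a neighbourhood of the corner set $\{0,1\} \times \partial D^3$. Hence $\Phi$ descends to a diffeomorphism of the corner-rounded ball $D^4$ that fixes $S^3$ pointwise and takes $D_{a,\phi}$ to $D_{a,\phi'}$, and Lemma~\ref{lem:diff}\eqref{it:1} then yields $D_{a,\phi} \sim D_{a,\phi'}$. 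This establishes that the isotopy class of the deform-spun disk depends only on $d$.

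The only substantive ingredient is Cerf's theorem; the rest is formal, and the single point that needs care is the corner-rounding, which is precisely why I build in the normalization that makes $\Phi$ literally the identity near the corners. Concretely, one fixes once and for all a corner-rounding convention for $I \times D^3$ together with an identification of the result with $(D^4, S^3)$, carrying $(\{0,1\} \times D^3) \cup (I \times S^2)$ onto $S^3$; then $\Phi$ is compatible with it with no further argument, being the identity in a neighbourhood of the region that gets modified. One should also record that every $D_{a,\phi}$ has the same boundary, namely the double of $(D^3, a)$ along $(S^2, \partial a)$, so that the disks being compared are genuinely slice disks of one and the same knot and Lemma~\ref{lem:diff}\eqref{it:1} applies as stated.
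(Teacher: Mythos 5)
Your proof is correct, and for the uniqueness half it takes a genuinely different route from the paper, which is worth recording. The paper cites Hatcher's theorem that $\Diff(D^3, S^2)$ is contractible and concludes in one line that ``$\phi$ is unique up to isotopy,'' implicitly invoking simple-connectedness to produce a homotopy of paths rel endpoints from $\phi$ to $\phi'$ and thence an ambient isotopy of the spun disks. You bypass simple-connectedness entirely: you build the explicit diffeomorphism $\Phi(t,x)=(t,\phi'_t\circ\phi_t^{-1}(x))$ of $I\times D^3$ --- the same gadget the paper itself deploys in the proof of Proposition~\ref{prop:complement} --- observe that it carries $D_{a,\phi}$ to $D_{a,\phi'}$ and is the identity on the whole boundary, and then hand the problem to Lemma~\ref{lem:diff}\eqref{it:1}, which converts a boundary-fixing diffeomorphism of $D^4$ into an isotopy of slice disks via the collar trick. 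This means you only need $\pi_0\Diff(D^3\ \mathrm{rel}\ \partial)=0$ (Cerf) for the existence of $\phi$; the paper's argument draws on the stronger contractibility statement (Hatcher) for the uniqueness step. Your added normalization of $\phi$ near $t=0,1$ and the remark about corner-rounding make the descent of $\Phi$ to $D^4$ genuinely clean, and the closing observation that all the $D_{a,\phi}$ bound the same knot is the right thing to say to justify applying Lemma~\ref{lem:diff}\eqref{it:1}. Both approaches buy the same result; yours is more explicit and slightly more economical in its topological input, while the paper's is terser and leans on the stronger theorem it is already citing.
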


\begin{proof}
By the work of Hatcher~\cite{Hatcher}, the group $\Diff(D^3, S^2)$ is contractible.
Hence, there exists an isotopy $\phi$ with $\phi_1 = d$. Furthermore, $\phi$ is unique up to isotopy.
\end{proof}

%Let $K$ be the knot in $S^3 = -D^3 \cup D^3$
%obtained by connecting the ends of $a \subset D^3$ by an unknotted arc in $-D^3$.
%Then $D_\phi$ is a slice disk for $-K \# K$, where $-K$ stands for $(-S^3, -K)$.
%We can extend the isotopy $\phi$ to $S^3$ by the identity, which we also denote by $\phi$.
%This fixes a neighborhood of a point of $K$.

\begin{define}
Let $K$ be a knot in $S^3$, and suppose that the open 3-ball $B$ intersects $K$ in an unknotted arc.
Then $(S^3 \setminus B, K \setminus B)$ is diffeomorphic to a ball-arc pair $(D^3, a)$.
Suppose that we are given a diffeomorphism $d \in \Diff(S^3, K)$ that is the identity on $B$.
Then the \emph{deform-spun} slice disk $D_{K, d} \subset B^4$ for $-K \# K$ is defined
to be $D_{a, d|_{S^3 \setminus B}}$.
\end{define}

We now recall the definition of roll-spinning, based on the description of
Litherland~\cite{Litherland}*{Example~2.2}.

\begin{define}\label{def:rolling}
  Let $K$ be a knot in $S^3$. Choose a tubular neighborhood $N(K) \approx K \times D^2$
  of $K$, and let $X = S^3 \setminus \Int(N(K))$ be the knot exterior.
  Furthermore, let $\d X \times I$ be a collar of $\d X$ in $X$.
  We identify $K$ with $\R/\Z$. Choose a smooth monotonic function $\varphi \colon \R \to I$
  such that $\varphi(t) = 0$ for $t \le 0$ and $\varphi(t) = 1$ for $t \ge 1$.
  We define the diffeomorphism $r \colon (S^3, K) \to (S^3, K)$ by the formula
  \[
  r(\bar{x}, \bar{\theta}, t) = \left(\overline{x + \varphi(t)}, \bar{\theta}, t \right)
  \text{ for } (\bar{x}, \bar{\theta}, t) \in K \times \d D^2 \times I \approx \d X \times I,
  \]
  and let $r(p)= p$ for $p \in S^3 \setminus (\d X \times I)$.

  Let $B \subset N(K)$ be an open 3-ball that intersects $K$ in an unknotted arc.
  Then $r$ is the identity on $B$. We define the \emph{$l$-roll-spin} of $K$ to be $D_{K, r^l}$.
\end{define}

We now give an equivalent definition of $D_{K, r^l}$. Let $A_l$ be an arc on the cylinder
$I \times K \subset I \times S^3$ such that $[p_K \circ A_l] = l \in \pi_1(K) \cong \Z$, where
$p_K \colon I \times K \to K$ is the projection. Furthermore, we let $A_0 = I \times \{x\}$.
Suppose that $\nu$ is an $I$-invariant normal framing of $I \times K$ in $I \times S^3$
that restricts to an odd framing of $\{0\} \times K \subset \{0\} \times S^3$.
We endow $A_l$ with the normal framing that is given by the normal of $A_l$ in $I \times K$,
followed by the framing $\nu|_{A_l}$. Note that homotopy classes of normal framings of $A_l$ relative
to $\d A_l$ correspond to $\pi_1(\SO(3)) \cong \Z_2$, hence the above framing does not depend on the choice of~$\nu$.
As we shall see in the proof of Lemma~\ref{lem:rolling},
the framed arcs $A_0$ and $A_l$ are homotopic, hence isotopic in $I \times S^3$ through an ambient isotopy
that fixes $\{0,1\} \times S^3$ pointwise. This induces a diffeomorphism
\[
d_l \colon (I \times S^3) \setminus N(A_l) \to (I \times S^3) \setminus N(A_0) \approx D^4,
\]
which is the identity on $(\{0, 1\} \times S^3) \setminus N(A_l)$, and is given
by the normal framing of $A_l$ on $\d N(A_l) \setminus (\{0, 1\} \times S^3)$.
Note that $d_l$ is only well-defined up to the action of $\Diff(D^4, S^3)$.
For an illustration, see Figure~\ref{fig:1}.

\begin{lem}\label{lem:rolling}
The slice disks $D_{K, r^l}$ and $d_l((I \times K) \setminus N(A_l))$ are isotopic.
\end{lem}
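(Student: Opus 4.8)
The plan is to split the argument into two stages: first verifying the claim, used to define $d_l$, that the framed arcs $A_0$ and $A_l$ are homotopic rel $\partial$ in $I\times S^3$; and then identifying $d_l\big((I\times K)\setminus N(A_l)\big)$ with $D_{K,r^l}$ up to isotopy. For the first stage, note that $I\times S^3$ is simply connected, so the underlying arcs $A_0$ and $A_l$ are homotopic rel their common endpoints. The discrepancy between the two prescribed framings along such a homotopy is an element of $\pi_1(\SO(3))\cong\Z_2$, and one checks it vanishes; this is a short $\Z_2$ computation in which the winding number $l$ of $A_l$ and the framing number of $\nu|_{\{0\}\times K}$ both appear, and it is precisely so that the two contributions cancel that $\nu|_{\{0\}\times K}$ is required to be odd. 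Once the framed arcs are known to be homotopic, the isotopy extension theorem for embedded arcs in the $4$-manifold $I\times S^3$ (codimension $3$, so homotopic framed embedded arcs are framed isotopic) produces an ambient isotopy fixing $\{0,1\}\times S^3$ pointwise and carrying the framed $A_l$ onto the framed $A_0$; its time-one map $\Psi$ gives $d_l=\Psi|_{(I\times S^3)\setminus N(A_l)}$. Since $d_l$ is determined only up to postcomposition by an element of $\Diff(D^4,S^3)$, the isotopy class of the resulting slice disk is nevertheless well defined, by Lemma~\ref{lem:diff}\eqref{it:1}.

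For the second stage I would present both slice disks as ``an annulus in $I\times S^3$, with $N(A_0)$ removed''. Choose $x\in K$ with $B=N(\{x\})$, so that $N(A_0)=I\times B$ and $(I\times S^3)\setminus N(A_0)=I\times(S^3\setminus B)=I\times D^3$ rounds to $D^4$ exactly as in Definition~\ref{def:spinning}. By definition $D_{K,r^l}=D_{a,\phi}$ for an isotopy $\phi$ of $D^3$ with $\phi_0=\Id$, $\phi_t|_{S^2}=\Id$, $\phi_1=r^l|_{D^3}$, which we may take to be the identity near $S^2$; extending each $\phi_t$ by the identity over $B$ gives an isotopy $\Phi$ of $S^3$ with $\Phi_0=\Id$, $\Phi_t|_B=\Id$, $\Phi_1=r^l$. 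Put $\mathcal A:=\bigcup_{t\in I}\{t\}\times\Phi_t(K)\subset I\times S^3$; since $\Phi_t$ fixes $B$ and restricts to $\phi_t$, we have $\Phi_t(K)\setminus B=\phi_t(a)$, hence $D_{K,r^l}=\mathcal A\setminus N(A_0)$ in $D^4$. On the other hand $\Psi(N(A_l))=N(A_0)$ and $\Psi$ fixes $\{0,1\}\times S^3$, so $d_l\big((I\times K)\setminus N(A_l)\big)=\Psi(I\times K)\setminus N(A_0)$. It therefore suffices to exhibit an ambient isotopy of $I\times S^3$ that fixes $\{0,1\}\times S^3$ and $N(A_0)=I\times B$ pointwise and carries the annulus $\mathcal A$ onto the annulus $\Psi(I\times K)$: restricting it to $(I\times S^3)\setminus N(A_0)=D^4$ yields the required isotopy of slice disks.

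The heart of the matter is this comparison of annuli. Both $\mathcal A$ and $\Psi(I\times K)$ agree with $I\times K$ along $\{0,1\}\times S^3$ and meet $I\times B$ in $I\times(K\cap B)$; what must be matched is the way each winds around $K$ as the $I$-coordinate runs from $0$ to $1$. For $\Psi(I\times K)$ this is built in, since $\Psi$ carries $A_0=I\times\{x\}$ to $A_l$, whose projection to $K$ represents $l\in\pi_1(K)$. For $\mathcal A$ one unwinds Definition~\ref{def:rolling}: the formula there shows directly that $r$ acts on $\pi_1(S^3\setminus K)$ by conjugation by the longitude, so $r^l$ acts by conjugation by its $l$-th power, and correspondingly one shows that the trace annulus $\mathcal A$ of an isotopy realising $r^l$ winds $l$ times around $K$ in the same way $A_l$ does. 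Matching the two windings — together with the fact, flowing from the odd-framing normalisation of Stage 1, that the normal framing $d_l$ imposes on $\partial N(A_0)$ coincides with the one implicit in the corner-rounding of $D_{a,\phi}$ — produces the desired ambient isotopy $\mathcal A\simeq\Psi(I\times K)$ fixing $N(A_0)$.

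The step I expect to be the main obstacle is the geometric identification in the previous paragraph: converting ``$r^l$ acts on $\pi_1(S^3\setminus K)$ by conjugation by the $l$-th power of the longitude'' into a concrete ambient isotopy of $I\times S^3$ matching the trace annulus $\mathcal A$ with $\Psi(I\times K)$ on the nose, keeping track of basepoints, collars, and the mod-$2$ normal framings. These are standard features of the spinning and rolling constructions, but doing the bookkeeping carefully — in particular checking that the oddness of $\nu$ reconciles the two framing conventions — is where the real work lies.
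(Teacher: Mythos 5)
Your two-stage outline matches the shape of the paper's argument, but both of the places where you flag ``work to be done'' are precisely where the paper's proof has nontrivial content, and in your write-up those steps are left as assertions rather than arguments. In Stage~1, you state that the framing discrepancy between $A_0$ and $A_l$ is ``a short $\Z_2$ computation'' that vanishes because the odd framing and the winding number $l$ cancel. That cannot be the shape of the computation: the lemma must hold for every $l$, so $l$ cannot contribute a surviving term mod~$2$; rather, one must show that the class in $\pi_1(\SO(4))\cong\Z_2$ of the loop $\psi$ tracking $K$ with its chosen odd framing is \emph{trivial}, so that all of its powers $\psi^l$ are trivial as well. This is the genuinely technical point, and the paper proves it by a Seifert surface argument: build the tangent--normal frame $\uv$ on a Seifert surface $S$ for $K$, observe via Poincar\'e--Hopf that $\uv|_K$ generates $H_1(\SO(4))\cong\Z_2$ because $\chi(S)$ is odd, and then note that replacing the Seifert framing by an odd framing of $K$ flips the $\Z_2$ class and kills it. Your proposal does not contain this idea, and without it the null-homotopy of $\psi$ (equivalently, the framed homotopy $A_l\simeq A_0$) is not established.

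In Stage~2, you honestly identify ``matching the trace annulus $\mathcal A$ with $\Psi(I\times K)$ on the nose'' as the main obstacle, and indeed as written there is no argument there; unwinding ``$r^l$ acts by conjugation by $\lambda^l$'' into an explicit ambient isotopy of $I\times S^3$ fixing $\{0,1\}\times S^3$ and $I\times B$ is a real task, not bookkeeping. The paper sidesteps it entirely by not taking an arbitrary isotopy $\Phi$ from $\Id$ to $r^l$: it uses the freedom (Lemma~3.4) to choose the \emph{specific} isotopy $\phi_t=\psi_{lt}\circ r_{lt}$, which fixes $B$ pointwise. Since $r_{lt}$ fixes $K$ setwise, the trace of $\phi_t$ on $K$ is literally $\Psi_l(I\times K)$, so $D_{K,r^l}=\Psi_l(I\times K)\setminus(I\times B)=d_l\bigl((I\times K)\setminus N(A_l)\bigr)$ as sets, and there is nothing left to match. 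Adopting this choice of $\phi$ would dissolve the obstacle you flagged; keeping an arbitrary $\Phi$ leaves the comparison of annuli an unproved claim.
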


\begin{proof}
Let $\gamma \colon I \to K$ be a smooth orientation-preserving parametrization of
the knot $K$, with $\gamma(0) = \gamma(1) = x$. We take the arc $A_l \colon I \to I \times K$ to be
\[
A_l(t) = (t, \gamma(lt)).
\]
Let $(e_1, e_2, e_3)$ be a positive orthonormal basis of $T_x S^3$
such that $e_1$ is positively tangent to~$K$.
Then there is a loop $\psi \colon I \to \SO(4)$ based at $\id_{\R^4}$ such that,
if we view $\psi(t)$ as an automorphism of $S^3$, then
$\psi(t)(x) = \g(t)$, $d\psi(t)(e_1)$ is a positive tangent to $K$ for every $t \in I$,
and $(d\psi(t)(e_2), d\psi(t)(e_3))$ corresponds to an odd framing of $K$.

We claim that $\psi$ is null-homotopic in $\SO(4)$. Indeed, let $S$ be a Seifert surface for $K$,
and choose tangent vector fields $v_1$ and $v_2$ on $S$ such that $v_1$ is generic,
$v_1|_K$ is tangent to $K$, and $v_2$ is obtained by rotating $v_1$ through an angle of~$\pi/2$ in $TS$.
Let $p_1, \dots, p_k \in \Int(S)$ be the zeros of $v_1$, and let $D_i \subset \Int(S)$ be a small disk about $p_i$.
Furthermore, let $v_3$ be the normal of $S$ in $S^3$, and $v_4(p) = p$ for $p \in S \subset \R^4$.
Then $\uv := (v_1/|v_1|, \dots, v_4/|v_4|)$ maps $S \setminus \{p_1, \dots, p_k\}$ to $\SO(4)$.
The cycle $\uv|_K$ is homologous to $\uv|_{\d D_1 \cup \dots \cup \d D_k}$ in $\SO(4)$,
where the homology is given by $\uv|_{S \setminus \Int(D_1 \cup \dots \cup D_k)}$.
Along $\d D_i$, the vector fields $v_1$ and $v_2$ rotate $\pm 1$, depending on the index of the
zero of $v_1$ at $p_i$, while $v_3$ and $v_4$ are nearly constant if $D_i$ is sufficiently small,
for $i \in \{1, \dots, k\}$. Hence $\uv|_{\d D_i}$ generates $H_1(\SO(4)) \cong \Z_2$.
By the Poincar\'e--Hopf theorem,
\[
\sum_{i = 1}^{k} \ind_{p_i}(v_1) = \chi(S),
\]
which is odd. It follows that $\uv|_K$ generates $H_1(\SO(4)) \cong \pi_1(\SO(4))$.
As the loop $\psi$ differs from $\uv|_K$ by changing $v_2$ and $v_3$ to an odd framing,
it follows that $\psi$ is null-homotopic in $\SO(4)$.

We extend $\psi \colon I \to \SO(4)$ periodically to a map $\psi \colon \R \to \SO(4)$.
We construct a diffeomorphism $\Psi_l \colon I \times S^3 \to I \times S^3$ via
\[
\Psi_l(t, p) := (t, \psi(lt)(p)).
\]
Since $\psi$ is null-homotopic in $\pi_1(\SO(4))$,
the diffeomorphism $\Psi_l$ is isotopic to $\id_{I \times S^3}$ relative to $\{0,1\} \times S^3$.
By construction, $\Psi_l$ maps the framed arc $A_l$ to $A_0$.
Let $B$ be a ball about $x$ so small that $\psi(t)(B)$ intersects $K$
in an unknotted arc for every $t \in I$.
We let $N(A_0) = I \times B$ and $N(A_l) = \Psi_l^{-1}(N(A_0))$,
and set $d_l = \Psi_l|_{(I \times S^3) \setminus N(A_l)}$; see Figure~\ref{fig:1}.

Write $r_t \colon S^3 \to S^3$ for the isotopy of $S^3$ obtained by rotating
$K$ by $2\pi t$ radians and keeping $S^3 \setminus N(K)$ fixed pointwise,
such that $r_0 = \id_{S^3}$ and $r_1 = r$. Note that
$\psi_{lt} \circ r_{lt}$ is an isotopy of $S^3$ from $\Id_{S^3}$ to $r^l$
that fixes the ball $B$ pointwise.
%Hence, we can take $\phi^l_t$ to be $\psi_{lt} \circ r_{lt}$.
By definition, the slice disk $D_{K, r^l}$ is obtained by
removing $I \times B$ from the trace of the isotopy $\psi_{lt} \circ r_{lt}$.
The isotopy $r_{lt}$ fixes the knot $K$ setwise, so it follows that
the trace of $\psi_{lt} \circ r_{lt}$ is simply the trace of $\psi_{lt}$.
Hence $D_{K, r^l}$ is isotopic to $d_l((I \times K) \setminus N(A_l))$.
\end{proof}

%\begin{cor}\label{cor:complement}
%Let $D_{K,l}$ be the slice disk obtained by $l$-roll-spinning~$K$,
%as in Definition~\ref{def:rolling}. Then the complement $D^4 \setminus D_{K,l}$ is diffeomorphic
%to $(I \times S^3) \setminus N(I \times K) = I \times (S^3 \setminus N(K))$
%for every $l \in \Z$. In particular,
%\[
%\pi_1(D^4 \setminus D_{K,l}) \cong \pi_1(S^3 \setminus N(K))
%\]
%for every $l \in \Z$.
%\end{cor}

This construction can be generalized, as follows.

\begin{define}\label{def:conc}
Let $K$ and $K'$ be knots in $S^3$ that both pass through a point $x \in S^3$,
and let $(I \times S^3, C, \sigma)$ be a decorated concordance from $K$ to $K'$,
such that one of the components $A$ of $\sigma$ has boundary $\{(0,x), (1,x)\}$.
Then we can obtain a slice disk $D_{\cC}$ for $-K \# K'$ in
\[
(I \times S^3) \setminus N(I \times \{x\}) \approx D^4
\]
by ambient isotoping $A$ to $I \times \{x\}$, with framings as above,
and applying the induced diffeomorphism~$d$ to $C \setminus N(A)$; see Figure~\ref{fig:1}.
As $d$ is well-defined up to $\Diff(D^4, S^3)$, we obtain that $D_{\cC}$ is unique up
to isotopy by Lemma~\ref{lem:diff}.
\end{define}

Conversely, every slice disk of $-K \# K'$ arises from this construction:

\begin{lem}\label{lem:sum}
  Let $K$ and $K'$ be knots in $S^3$ that both pass through a point $x \in S^3$.
  Given a slice disk $D$ of $-K \# K'$, there is
  a decorated concordance $\cC$ from $K$ to $K'$ such that $D = D_\cC$.
\end{lem}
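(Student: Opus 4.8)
The plan is to reverse the construction of Definition~\ref{def:conc}: given a slice disk $D$ of $-K\#K'$ in $D^4$, I want to recognize $D^4$ as $(I\times S^3)\setminus N(I\times\{x\})$, so that $D$ together with an appropriate neighborhood of the removed solid torus assembles into a concordance $C$ from $K$ to $K'$. The first step is to produce the connected sum sphere. Since $-K\#K'$ is a connected sum, there is a $2$-sphere $S\subset S^3$ meeting $-K\#K'$ in two points, splitting it into the summands; I push $S$ slightly into $D^4$ to a properly embedded $3$-ball $H$ with $\partial H=S$ and $H$ transverse to $D$, meeting $D$ in a single unknotted arc (by isotoping $H$ rel boundary, using that the two intersection points can be joined by an arc in $D$ unknotted in $H$ — this uses that $D$ is a disk, so $D\cap H$ is an arc once we arrange transversality and minimize intersections). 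Tubing out a neighborhood $N(H)\cong H\times[-1,1]$ of this ball cuts $D^4$ along $H$; the complement $D^4\setminus N(\Int H)$ is again a $4$-ball (cutting $D^4$ along a properly embedded $3$-ball yields a $4$-ball), and $D\setminus N(\Int H)$ meets the two copies of $H$ on the boundary in unknotted arcs.

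Next I reinterpret this. The boundary $S^3=\partial D^4$ is cut by $S$ into two balls, containing the knotted arcs giving $-K$ and $K'$ respectively; equivalently, removing $N(H)$ from $D^4$ and looking at what happens on the boundary, we see $S^3\setminus N(S)$ as two balls $B_{-K}$ and $B_{K'}$. The claim is that the resulting $4$-ball, with its two distinguished boundary $3$-balls $H_0,H_1$ and the properly embedded surface $D\setminus N(\Int H)$ connecting them, is diffeomorphic to $(I\times S^3)\setminus N(I\times\{x\})$ with the surface being the trace of a concordance minus a tube. Concretely: $I\times S^3$ with an unknotting tube $I\times\{x\}$ removed is a $4$-ball whose boundary contains two copies of $S^3\setminus N(x)\cong D^3$ (namely $\{0\}\times S^3$ and $\{1\}\times S^3$ with the tube's footprint removed), joined around the rest of the boundary by $I\times(\partial(S^3\setminus N(x)))=I\times S^2$. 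Matching $\{0\}\times S^3\setminus N(x)$ with $B_{-K}$ and $\{1\}\times S^3\setminus N(x)$ with $B_{K'}$, and using uniqueness of $4$-balls bounding a given decomposition of $S^3$, identifies our cut-open $D^4$ with $(I\times S^3)\setminus N(I\times\{x\})$. Under this identification the reglued surface $D$ (undoing the tube along $H$) becomes a concordance $C$ from $-K\cup(\text{unknot})$ — i.e.\ after closing up the tube, from $K$ to $K'$ — properly embedded in $I\times S^3$, and the tube $N(A)$ along which we cut corresponds to the component $A$ of a decoration $\sigma$ with $\partial A=\{(0,x),(1,x)\}$; the remaining components of $\sigma$ are chosen arbitrarily in the standard way on $I\times S^3$, compatibly with chosen decorations on $K$ and $K'$. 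Tracking the framings: the arc $A$ inherits a normal framing from the embedding, and by the argument in the proof of Lemma~\ref{lem:rolling} (the $\pi_1(\SO(3))\cong\Z_2$ count, forcing the odd framing) this is the framing used in Definition~\ref{def:conc}, so that applying the induced diffeomorphism $d$ recovers exactly $D$. Hence $D=D_{\cC}$ for $\cC=(I\times S^3,C,\sigma)$.

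The main obstacle is the rigidity step: showing that the cut-open $4$-manifold really is $(I\times S^3)\setminus N(I\times\{x\})\approx D^4$ \emph{together with the correct identification on the boundary and the correct position of the surface}, rather than merely abstractly diffeomorphic to $D^4$. This is where one must be careful — a priori a diffeomorphism $D^4\cong(I\times S^3)\setminus N(I\times\{x\})$ need not restrict nicely to $S^3=\partial D^4$, nor carry $D$ to a product-like concordance. The resolution is to not ask for a product structure on $C$ at all: $C$ is an arbitrary concordance, and \emph{any} properly embedded annulus-free surface in $I\times S^3$ restricting to $K$ and $K'$ on the two ends and meeting $\{0,1\}\times S^3$ transversally is, by definition, a concordance; what we genuinely need is only that $(D^4,D)$ cut along $(H,D\cap H)$ embeds in $(I\times S^3, C)$ as the complement of a tubular neighborhood of an unknotting arc, and this is arranged by building $C$ \emph{from} $D$ rather than the reverse. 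So the delicate point reduces to: (i) the cut-open $D^4$ is a $4$-ball (standard: cutting a $4$-ball along a properly embedded $3$-ball), and (ii) regluing along the $3$-ball $H$ is inverse to removing a tubular neighborhood of a properly embedded arc $A\subset I\times S^3$ with boundary on the two ends, which is a local statement near $A$ — here one invokes Lemma~\ref{lem:diff} to absorb the $\Diff(D^4,S^3)$ ambiguity in the choice of $d$, exactly as in Definition~\ref{def:conc}. Once these are in place the identification $D=D_\cC$ is immediate.
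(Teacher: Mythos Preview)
Your underlying idea---invert Definition~\ref{def:conc}---is exactly what the paper does, but your execution is far more elaborate than needed and introduces genuine gaps. The paper's argument is three lines: one simply \emph{starts} from the identification $D^4\approx(I\times S^3)\setminus N(I\times\{x\})$, which is already implicit in the hypothesis that $K$ and $K'$ both pass through $x$ (this is how $-K\#K'$ sits on the boundary of that model). With that identification fixed, one reattaches the $3$-handle $N(I\times\{x\})$, glues to $D$ a band $R\subset N(I\times\{x\})$ attached along the two arcs of $(-K\#K')\cap N(I\times\{x\})$ and containing the core $I\times\{x\}$, and sets $C=D\cup R$. The decoration arc is taken to be $A=I\times\{x\}$ itself. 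Since $A$ is already the straight arc, removing $N(A)$ recovers $(D^4,D)$ on the nose: no isotopy of $A$, no framing computation, and no appeal to Lemma~\ref{lem:rolling} is required.

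Your detour through a properly embedded $3$-ball $H$ with $\partial H=S$ creates two real problems. First, the phrase ``push $S$ slightly into $D^4$ to a properly embedded $3$-ball'' does not describe a well-defined construction (a $2$-sphere does not become a $3$-ball under a push-in); if you mean pushing one of the two $3$-balls that $S$ bounds in $S^3$ into a collar, then $H\cap D$ is the corresponding \emph{knotted} summand arc, not an unknotted one, while if $H$ is a deeper spanning $3$-ball then arranging $H\cap D$ to be a single unknotted arc by isotopy rel boundary is a nontrivial claim you have not justified. Second, the assertion that cutting $D^4$ along an arbitrary smooth properly embedded $3$-ball yields a $4$-ball is essentially the smooth $4$-dimensional Schoenflies problem and cannot be invoked as ``standard''. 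Both issues vanish once you notice that the paper never cuts $D$ or $D^4$ at all: it only glues a band to $\partial D$ inside the reattached handle, leaving the interior of $D$ untouched.
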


\begin{proof}
  Let us view $D^4$ as $(I \times S^3) \setminus N(I \times \{x\})$, with $K$ in $\{0\} \times S^3$
  and $K'$ in $\{1\} \times S^3$. From the slice disk $D$, we obtain the decorated concordance $\cC = (C, \sigma)$
  by reattaching the 3-handle $N(I \times \{x\})$ to $D^4$, and setting $C = D \cup R$, where $R$ is a band
  attached along the two arcs of $(-K \# K') \cap N(I \times \{x\})$ that contains $I \times \{x\}$.
  Finally, we choose the decoration $\sigma$ such that it consists of two parallel arcs,
  one of which is $I \times \{x\}$.
\end{proof}

\begin{figure}[ht!]
  \centering
  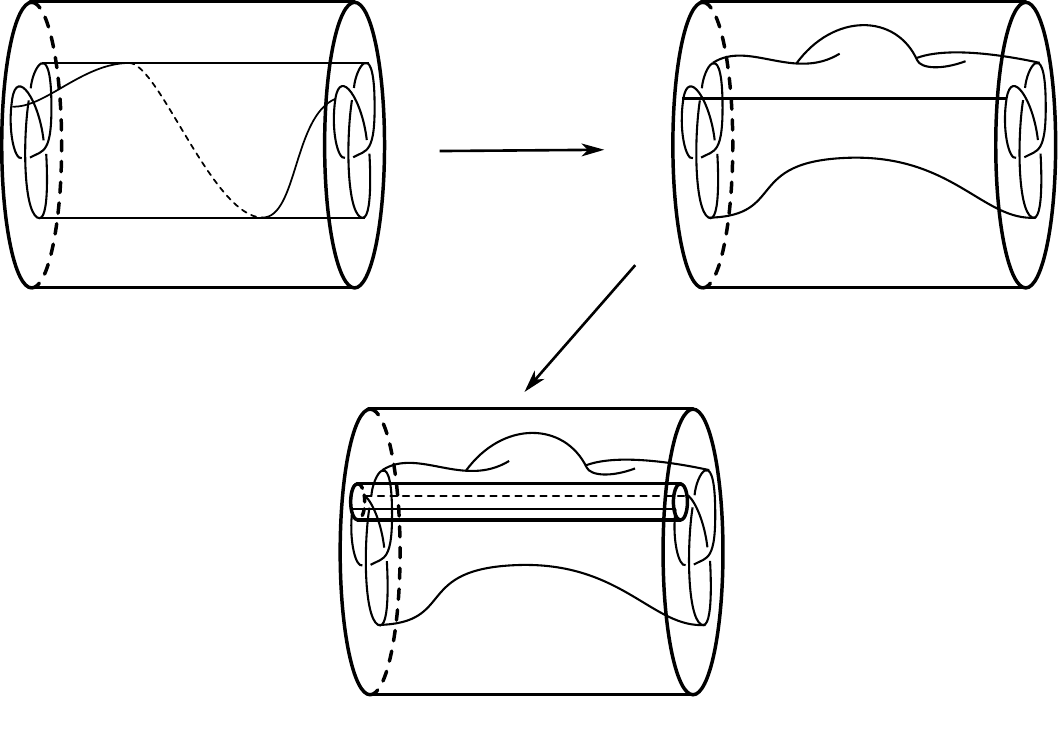
  \caption{Constructing the slice disk $D_{\cC}$ in $D^4$ from a decorated concordance
  $\cC = (C,\sigma)$ and a chosen arc $A$ in $\sigma$.
  \label{fig:1}}
\end{figure}

\begin{define}
  Let $D$ be a slice disk of a knot $K$ in $S^3$. Then we call the homomorphism
  \[
  h_D \colon \pi_1(S^3 \setminus K) \to \pi_1(D^4 \setminus D)
  \]
  induced by the embedding the \emph{peripheral map} of the slice disk $D$.
  Given slice disks $D$ and $D'$ of $K$, we say that $h_D$ and $h_{D'}$ are \emph{equivalent}
  if there is a homomorphism $g \colon \pi_1(D^4 \setminus D) \to \pi_1(D^4 \setminus D')$
  for which $h_{D'} = g \circ h_D$.
\end{define}

If $D$ and $D'$ are isotopic, then clearly their peripheral maps $h_D$ and $h_{D'}$ are equivalent.
Gordon~\cite[Lemma~3.1]{Gordon} proved that if $D$ is a ribbon disk, then $h_D$ is surjective.

\begin{prop}\label{prop:isot}
  Let $D_{K, d}$ and $D_{K, d'}$ be deform-spun slice disks of a knot $-K \# K$ in $S^3$,
  where $d$ and $d'$ are automorphisms of $(S^3, K)$ that are fixed in a neighborhood $B$ of a point on $K$.
  Then $D_{K, d}$ and $D_{K, d'}$ are isotopic if and only if $d$ and $d'$ are isotopic in $\Diff(S^3, K, B)$.
\end{prop}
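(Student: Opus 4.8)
The plan is to establish the two implications separately. The forward direction is essentially formal. If $d$ and $d'$ are isotopic in $\Diff(S^3, K, B)$, then, writing the isotopy as $d_s$ with $d_0 = d$ and $d_1 = d'$, we can concatenate a defining isotopy $\phi$ for $D_{K,d}$ (with $\phi_1 = d$) with the family $d_s$ to obtain a defining isotopy $\phi'$ with $\phi'_1 = d'$. By the lemma following Definition~\ref{def:spinning} (applied in the ball-arc pair obtained by removing $B$), the isotopy class of the deform-spun disk depends only on the terminal automorphism, so $D_{K,d} \sim D_{K,d'}$. One must check the bookkeeping that the concatenation stays within the class of isotopies permitted in Definition~\ref{def:spinning}, i.e.\ fixes $S^2$ pointwise and preserves the arc at time $1$; since each $d_s$ fixes $B$ and $K$ setwise, this is immediate.

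For the reverse direction, suppose $D_{K,d} \sim D_{K,d'}$. The key idea is to recover $d$ (up to isotopy in $\Diff(S^3,K,B)$) from the isotopy class of the slice disk together with the product structure on its complement. Recall from the remark after Proposition~\ref{prop:complement} that $D^4 \setminus D_{K,d} \approx I \times (D^3 \setminus a)$, where $(D^3, a) \approx (S^3 \setminus B, K \setminus B)$. The automorphism $d$ determines, and is determined up to isotopy by, the monodromy of this product: more precisely, the two ends $\{0\}\times(D^3\setminus a)$ and $\{1\}\times(D^3\setminus a)$ are both identified with $D^3 \setminus a$, and $d|_{S^3\setminus B}$ is the gluing map comparing the two identifications. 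An isotopy of $D^4$ fixing $S^3$ carrying $D_{K,d}$ to $D_{K,d'}$ restricts to a diffeomorphism of complements that is the identity on $\partial D^4 \supset S^3$, hence on both product ends; tracking this through the two product structures shows $d|_{S^3 \setminus B}$ and $d'|_{S^3\setminus B}$ differ by something isotopic to the identity in $\Diff(D^3, a, S^2)$, equivalently that $d$ and $d'$ are isotopic in $\Diff(S^3, K, B)$.

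The main obstacle is making the last paragraph rigorous: a bare isotopy of $D^4$ carrying one disk to the other need not respect the product structures on the complements on the nose, so one cannot simply read off that the monodromies agree. The cleanest way around this, which I expect to be the technical heart of the argument, is to pass to mapping tori / use that $\Diff(D^3, S^2)$ is contractible (Hatcher) so that the relevant bundle data over $I$ is classified by the terminal gluing map up to the action of $\pi_0$ of the fiber diffeomorphism group rel boundary, and to invoke Lemma~\ref{lem:diff}\eqref{it:1} to upgrade ``diffeomorphic rel $S^3$'' to ``isotopic'' where needed. One then reduces to the statement that two automorphisms of $(S^3, K, B)$ inducing the same deform-spun disk (up to the allowed ambiguity $\Diff(D^4, S^3)$) must be isotopic, which follows because the construction $d \mapsto D_{K,d}$ factors through $\pi_0\Diff(S^3, K, B)$ and, by the uniqueness clause in the lemma after Definition~\ref{def:spinning} together with contractibility of $\Diff(D^3, S^2)$, this factorization is injective. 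I would also remark, as the paper does in the surrounding discussion, that $\pi_0\Diff(S^3, K, B)$ is itself computed by the action on $\pi_1(S^3 \setminus K)$ via Waldhausen--Cerf--Hatcher, which is what makes the criterion effective in practice.
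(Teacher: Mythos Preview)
Your forward direction is fine and matches the paper's (implicit) treatment via the lemma following Definition~\ref{def:spinning}.

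For the reverse direction, you correctly set up the comparison: an isotopy carrying $D_{K,d}$ to $D_{K,d'}$ yields, via the product identifications from Proposition~\ref{prop:complement}, a diffeomorphism $\Theta$ of $(I\times D^3, I\times a)$ that is the identity on $\{0\}\times D^3 \cup I\times S^2$ and equals $g:=(d')^{-1}\circ d$ on $\{1\}\times D^3$. But your proposed method for concluding that $g$ is isotopic to the identity in $\Diff(D^3,a,S^2)$ does not work. There is no ``bundle data over $I$'' to classify: $\Theta$ is not level-preserving, so no fibre-bundle argument applies, and contractibility of $\Diff(D^3,S^2)$ is irrelevant because it ignores the arc $a$ (indeed $\Diff(D^3,a,S^2)$ has nontrivial $\pi_0$---the roll map $r$ lives there). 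Your final sentence asserts that injectivity of $d\mapsto D_{K,d}$ ``follows'' from the uniqueness clause in the earlier lemma together with Hatcher, but that clause only gives well-definedness; the injectivity is precisely what is to be proved, so this step is circular.

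The missing ingredient is the passage through $\pi_1$ followed by Waldhausen's theorem, which you relegate to a closing remark about effectiveness ``in practice'' rather than recognising as the crux of the argument. The paper computes the peripheral map $h_{D_{K,d}}\colon \pi_1(-K\#K)\cong\pi_1(-K)\ast_\mu\pi_1(K)\to\pi_1(D^4\setminus D_{K,d})\cong\pi_1(K)$ explicitly as $\gamma\ast_\mu\eta\mapsto\gamma\, d_*^{-1}(\eta)$; equivalence of peripheral maps for isotopic disks then forces $d_*=d'_*$ on $\pi_1(S^3\setminus K)$. From there, a direct cell-by-cell homotopy (using $\pi_2=\pi_3=0$ for the knot exterior) together with Waldhausen and Cerf--Hatcher upgrades $d_*=d'_*$ to an isotopy in $\Diff(S^3,K,B)$. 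In your framework the same $\pi_1$ step is what would show $g_*=\Id$: since $\Theta$ is the identity on the $\{0\}$-end (a $\pi_1$-equivalence) and fixes a path $I\times\{p\}\subset I\times S^2$ to the $\{1\}$-end, one reads off $g_*=\Id$, and only then does Waldhausen apply. Either way, it is the $3$-manifold topology of the knot complement---not any $4$-dimensional bundle or pseudo-isotopy argument---that closes the gap.
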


\begin{proof}
  Pick a basepoint $p \in \d B \setminus K$.
  We are going to write $\pi_1(K)$ for $\pi_1(S^3 \setminus K, p)$
  and $\pi_1(D)$ for $\pi_1(D^4 \setminus D, p)$.
  Choose an isotopy $\phi \colon I \times S^3 \to S^3$
  such that $\phi_0 = \Id_{S^3}$ and $\phi_1 = d$.
  Notice that
  \[
  \pi_1(-K \# K) \cong \pi_1(-K) \ast_\mu \pi_1(K),
  \]
  where the amalgamated product is obtained by identifying
  a meridian $\mu$ of $-K$ with the corresponding meridian of $K$.

  Let $D_{K, \Id}$ be the deform-spun disk corresponding to $\Id_{S^3}$.
  Then consider the diffeomorphism
  \[
  \begin{split}
     \Phi \colon (D^3 \times I) \setminus D_{K, \Id} &\to (D^3 \times I) \setminus D_{K, \phi}. \\
       (x,t) &\mapsto (\phi_t(x), t)
  \end{split}
  \]
  The embedding
  \[
  S^3 \setminus (\Int(B) \cup K) \stackrel{\sim}{\longrightarrow} (D^3 \setminus K) \times \{0\}
  \hookrightarrow (D^3 \times I) \setminus D_{K, \phi}
  \]
  induces an isomorphism on $\pi_1$, whose inverse we denote by $i_\phi \colon \pi_1(D_{K, \phi}) \to \pi_1(K)$.
  Similarly, we define the isomorphism $i_{\Id} \colon \pi_1(D_{K, \Id}) \to \pi_1(K)$.
  With this notation, the following diagram is commutative:
  \[
  \xymatrix{
  \pi_1(-K \# K) \ar[r]^-{h_{D_{K, \Id}}} \ar[d]|-{\Id \ast_\mu d_*} &
  \pi_1(D_{K, \Id}) \ar[r]^-{i_{\Id}} \ar[d]^-{\Phi_*} & \pi_1(K) \ar[d]^-{\rotatebox{90}{=}}\\
  \pi_1(-K \# K) \ar[r]^-{h_{D_{K, \phi}}} & \pi_1(D_{K, \phi}) \ar[r]^-{i_\phi} & \pi_1(K).
  }
  \]
  Here, the map $\pi_1(-K \# K) \to \pi_1(-K \# K)$ takes $\g \ast_\mu \eta$ to $\g \ast_\mu d_*(\eta)$,
  and the vertical arrow $\pi_1(K) \to \pi_1(K)$ is the identity because
  $\Phi|_{(D^3 \setminus K) \times \{0\}} = \phi_0 = \Id$, while
  $\Phi|_{(D^3 \setminus K) \times \{1\}} = \phi_1 = d$. Since
  $i_{\Id} \circ h_{D_{K, \Id}}(\g \ast_\mu \eta) = \g\eta$, we have
  \[
  \begin{split}
     i_\phi \circ h_{D_{K, \phi}} \colon \pi_1(-K \# K)  &\to \pi_1(K). \\
     \g \ast_\mu \eta  &\mapsto \g d_*^{-1}(\eta)
  \end{split}
  \]

  If $D_{K, d}$ and $D_{K, d'}$ are isotopic, their peripheral maps are equivalent.
  So, there exists an isomorphism $g \colon \pi_1(K) \to \pi_1(K)$ such that
  \[
  g(\g d_*^{-1}(\eta)) = \g (d'_*)^{-1}(\eta)
  \]
  for every $\g$, $\eta \in \pi_1(K)$. If we set $\eta = 1$, then we see that $g = \Id_{\pi_1(K)}$.
  Hence, if we set $\g = 1$, we obtain that $d_* = d'_*$.

  Let $X = S^3 \setminus N(K)$ be the knot exterior, and suppose that the basepoint $p \in \d X$.
  Since $\pi_1(\d X) \to \pi_1(X)$ is injective unless $K$ is the unknot,
  $(d|_{\d X})_*$ and $(d'|_{\d X})_*$ agree on $\pi_1(\d X)$. As both $d$ and $d'$ are orientation
  preserving, $d|_{\d X}$ and $d'|_{\d X}$ are isotopic. Hence, we can assume that
  $d$ and $d'$ agree on $\d X$.

  We now construct a homotopy between $d$ and $d'$ that fixes $\d X$ pointwise.
  Choose a triangulation of $X$. Let $T$ be a spanning tree of the 1-skeleton of $X$
  such that $T \cap \d X$ is a spanning tree of the 1-skeleton of $\d X$.
  We can homotope $d|_T$ to $d'|_T$, fixing $T \cap \d X$, since $\pi_1(T, T \cap \d X) = 0$.
  Given any 1-simplex $e$ not in $T$ or $\d X$,
  it determines an element of $\pi_1(X)$. As $d_*(e) = d'_*(e)$, we can homotope
  $d|_e$ to $d'|_e$ relative to $\d e$. Once $d$ and $d'$ agree on the 1-skeleton,
  we can extend the homotopy to the 2- and 3-skeleta as $\pi_2(X) = \pi_3(X) = 0$.

  It now follows from the work of Waldhausen \cite[Theorem~7.1]{Waldhausen}
  that $d$ and $d'$ are isotopic in $\text{Homeo}_+(X, \d X)$,
  and hence also in $\Diff_+(X, \d X)$ according to Cerf~\cite{Cerf2}
  and Hatcher~\cite{Hatcher}. In particular, the isotopy can be chosen to be the identity on $B$,
  so $D_{K, d}$ and $D_{K, d'}$ are isotopic in $\Diff_+(S^3, K, B)$.
\end{proof}

\begin{cor}
  If $K$ is a non-trivial knot in $S^3$, then the roll-spun slice disks $D_{K, r^k}$ and $D_{K, r^{l}}$
  are isotopic if and only if $k = l$.
\end{cor}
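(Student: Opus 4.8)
The plan is to deduce the corollary from Proposition~\ref{prop:isot}, reducing it to a statement about the knot group. Since the rolling diffeomorphism $r$ of Definition~\ref{def:rolling} is the identity on the ball $B$, so is every power $r^l$, and $D_{K,r^l}$ is by definition the $l$-roll-spin; thus Proposition~\ref{prop:isot} applies with $d=r^k$ and $d'=r^l$, and tells us that $D_{K,r^k}$ and $D_{K,r^l}$ are isotopic if and only if $r^k$ and $r^l$ are isotopic in $\Diff(S^3,K,B)$. Any isotopy in $\Diff(S^3,K,B)$ fixes the basepoint $p$ and preserves $S^3\setminus K$, so it yields a path from $(r_*)^k$ to $(r_*)^l$ in $\Aut(\pi_1(S^3\setminus K,p))$; equivalently $(r_*)^{k-l}=\id$. (This is exactly the first implication --- ``isotopic disks induce equal automorphisms'' --- established in the proof of Proposition~\ref{prop:isot}.) So it suffices to show that $r_*$ has infinite order.

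To compute $r_*$, restrict $r$ to the exterior $X=S^3\setminus\Int(N(K))$. There it is supported in the collar $\d X\times I$, on which the formula of Definition~\ref{def:rolling} is a longitudinal shear realizing one full twist as $t$ runs from $0$ to $1$; since $r$ is the identity on $\d X\times\{0,1\}$, it is the Dehn twist of $X$ along a $\d$-parallel torus $T$ about the longitude $\lambda=K\times\{\mathrm{pt}\}\subset\d(K\times D^2)$ (we take $N(K)$ to be the $0$-framed tubular neighbourhood, so that $\lambda$ is a Seifert longitude and $[\lambda]=0$ in $H_1(S^3\setminus K)$). Splitting $X=(T^2\times I)\cup_T X'$ and applying van Kampen, this Dehn twist acts trivially on the peripheral subgroup and by $g\mapsto\lambda g\lambda^{-1}$ on $\pi_1(X')\cong\pi_1(X)$; moving the basepoint from $\d X$ into $N(K)$ conjugates $\lambda$ only by a peripheral element and hence does not change it. Thus $r_*$ is conjugation by $\lambda^{\pm1}$. (This is consistent with the description in Lemma~\ref{lem:rolling} via the arc $A_l$, which wraps $l$ times around $K$.)

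It remains to see that, when $K$ is non-trivial, $\lambda^m$ is central in $G:=\pi_1(S^3\setminus K,p)$ only for $m=0$. As $K$ is non-trivial, $\d X$ is incompressible, so the peripheral subgroup $\langle\mu,\lambda\rangle\cong\Z^2$ injects into $G$; in particular $\lambda$ has infinite order, so $\lambda^m=1$ forces $m=0$. By the classification of knot groups with non-trivial centre, $Z(G)$ is trivial unless $K$ is a torus knot, in which case $Z(G)$ is infinite cyclic, generated by a regular Seifert fibre $h$ that is homologically essential in $X$; since $[\lambda]=0$, in both cases $Z(G)\cap[G,G]=\{1\}$. As $[\lambda^m]=m[\lambda]=0$, a central $\lambda^m$ lies in $Z(G)\cap[G,G]$, hence $\lambda^m=1$ and $m=0$. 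Applying this with $m=k-l$ gives $k=l$; the converse is trivial.

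The main obstacle is the second step, identifying $r_*$ with conjugation by a longitude. One must observe that the naive ``un-shear'' isotopy fails to respect $\d X\times\{1\}$, so $r$ is genuinely non-trivial, and that the longitude appearing is the homologically trivial one --- which is why $N(K)$ is taken $0$-framed. Granting that, the remainder is standard $3$-manifold topology.
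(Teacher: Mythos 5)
Your proposal is correct and follows the same strategy as the paper: reduce via Proposition~\ref{prop:isot} to showing $r^{k-l}_*\neq\Id$ for $k\neq l$, identify $r_*$ as conjugation by the longitude $\lambda$, and observe that no power of $\lambda$ is central in $\pi_1(S^3\setminus K)$ when $K$ is non-trivial. The only difference is one of detail: the paper simply asserts the last two facts, whereas you derive $r_*=c_{\lambda^{\pm1}}$ from Definition~\ref{def:rolling} (taking care to use the Seifert framing so that $[\lambda]=0$) and justify the centrality claim via Burde--Zieschang together with the observation that $Z(G)$ injects into $H_1$.
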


\begin{proof}
  Let $\lambda \in \pi_1(K)$ be the homotopy class of the longitude of $K$.
  Then $r_*(\g) = \lambda \g \lambda^{-1}$ for every $\g \in \pi_1(K)$. Since no power of
  $\lambda$ is central in $\pi_1(K)$ for a non-trivial knot~$K$, we see that
  $r^k$ and $r^{l}$ are isotopic in $\Diff(S^3, K , B)$ if and only if $k = l$.
  The result now follows from Proposition~\ref{prop:isot}.
\end{proof}

\section{An invariant of slice disks in knot Floer homology}\label{sec:invariant}

In this section, we review the necessary background on concordance maps and slice disk
invariants in knot Floer homology.

\begin{define}
A \emph{decorated knot} $(K,P)$ in $S^3$ consists of an oriented
knot $K$ in $S^3$, and a pair of basepoints $P = \{w, z\} \subset K$.
A \emph{decorated concordance} $(I \times S^3, C, \sigma)$ from the decorated knot
$(K_0, P_0)$ to $(K_1, P_1)$ consists of an
embedded annulus $C \subset I \times S^3$ such that $C \cap (\{i\} \times S^3) =
\{i\} \times K_i$ for $i \in \{0, 1\}$, together
with a pair of disjoint arcs $\sigma$ on the annulus $A$, both connecting
$K_0 \setminus P_0$ and $K_1 \setminus P_1$.  Furthermore
one component of $C \setminus \sigma$ contains $w_0$ and $w_1$, and the other
component contains $z_0$ and $z_1$.
\end{define}

Given a decorated knot $(K, P)$, Ozsv\'ath--Szab\'o~\cite{OSKnots}, and
independently Rasmussen~\cite{RasmussenKnots}, defined a bigraded
$\bF_2$-vector space 
\[
\HFKh(K,P) = \bigoplus_{i, j \in \Z} \HFKh_i(K, P, j), 
\]
called the \emph{knot Floer homology} of
$(K, P)$. Thurston and the first author~\cite{JTNaturality}*{Theorem~1.8} showed that this is
natural with respect to the mapping class group of $(S^3, K, P)$.
By the work of Sarkar~\cite{SarkarMovingBasepoints}*{Theorem~1.1},
if we apply a positive Dehn twist $r$ along $K$ (see Definition~\ref{def:rolling}), the induced map
\[
r_* \colon \HFKh(K, P) \to \HFKh(K, P)
\]
is given by the formula
\[
r_* = \Id_{\HFKh(K,P)} + \Phi\Psi,
\]
where $\Phi$ and $\Psi$ are commuting endomorphisms of $\HFKh(K, P)$
such that $\Phi^2 = \Psi^2 = 0$.

To a decorated concordance $(C,\sigma)$ from $(K_0,P_0)$ to $(K_1,P_1)$,
the first author \cite{JCob} assigned a linear map
\[
F_{C,\sigma}\colon \hat{\HFK}(K_0,P_0)\to \hat{\HFK}(K_1,P_1).
\]
According to \cite{JMConcordance}, this is always non-vanishing.
Given a decorated knot $(K,P)$ in $S^3$ and a slice disk $D$ for $K$ in $D^4$,
Marengon and the first author~\cite{JMConcordance} defined an element
\[
t_{D,P} \in \HFKh_0(K,P,0)
\]
that is invariant under smooth isotopies of $D$. It is obtained by removing a ball
from $D^4$ about a point of $D$ that intersects $D$ in a disk,
choosing an arbitrary decoration $\sigma$ on the resulting
concordance $C$ from the decorated unknot $(U, P_U)$ to $(K, P)$,
and setting $t_{D,P} = F_{(C,\sigma)}(1)$, where
\[
F_{(C,\sigma)} \colon \HFKh(U, P_U) \cong \bF_2 \to \HFKh(K,P)
\]
is the induced map on knot Floer homology. The element $t_{D,P}$ is independent
of the choice of decoration $\sigma$.

In fact, $t_{D, P}$ is invariant under stable isotopy. Indeed, we have the following:

\begin{lem}
  Let $D$ be a slice disk of the based knot $(K, P)$ in $S^3$.
  If $S$ is a 2-knot in $D^4$, then
  \[
  t_{D, P} = t_{D \# S, P}.
  \]
\end{lem}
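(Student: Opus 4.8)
The plan is to show that connect-summing a slice disk $D$ with a $2$-knot $S$ does not change the induced concordance map, hence does not change $t_{D,P} = F_{(C,\sigma)}(1)$. First I would set up the concordances: removing a small ball about a point of $D$ meeting $D$ in a trivial disk yields a decorated concordance $(C,\sigma)$ from the decorated unknot $(U,P_U)$ to $(K,P)$, and by the same construction applied to $D\#S$ we get a concordance $C'$. The key geometric observation is that, since the connected sum with $S$ happens in a small ball in the interior of $D^4$ away from the chosen puncture point, the concordance $C'$ is obtained from $C$ by an internal connect sum with the $2$-knot $S$ inside $I\times S^3$; that is, $C' = C \# S$ as a decorated cobordism, with the decoration unaffected.

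Next I would invoke the behaviour of the link cobordism maps under internal connected sum with a closed surface. Since $S$ is a $2$-sphere in the interior of $I\times S^3$ and the connect sum is taken in a ball disjoint from the decorations, the cobordism $(C\#S,\sigma)$ factors through $(C,\sigma)$ composed with a cobordism that is a once-punctured $S$ glued onto a product piece. Concretely, $(C\#S,\sigma)$ is the composition of $(C,\sigma)$ with the cobordism $W$ inside $I\times S^3$ obtained from $(I\times K)$ by doing the connected sum with $S$ in a ball meeting $I\times K$ in a trivial arc. By functoriality of the link Floer TQFT, $F_{(C\#S,\sigma)} = F_W \circ F_{(C,\sigma)}$, so it suffices to show $F_W = \Id_{\HFKh(K,P)}$.

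To see $F_W$ is the identity, note that $W$ differs from the product concordance $I\times K$ only inside a small ball $B^4 \subset \Int(I\times S^3)$ in which it looks like the trivial arc connect-summed with $S$; equivalently, $W$ is the product concordance with an $S$-summand tucked into a collar. One can either cite the known formula that the link cobordism map is unchanged under connected sum with a closed surface of genus zero carrying no dividing curves in the added handle region (the added $S$ contributes a factor that is the identity because a $2$-sphere with the standard dividing set gives the identity map, by the analogous computation for closed components), or argue directly: the connect-sum ball can be pushed into the collar $\{0\}\times S^3$, at which point $W$ becomes the product cobordism precomposed with the identity map on $\HFKh(U,P_U)\cong\bF_2$, using that $t_{D,P}$ is independent of the decoration and stabilization by a $2$-sphere in $S^3$ is trivial. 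Either way $F_W(t_{D,P}) = t_{D,P}$, and combined with the previous paragraph this gives $t_{D\#S,P} = F_{(C\#S,\sigma)}(1) = F_W(F_{(C,\sigma)}(1)) = F_W(t_{D,P}) = t_{D,P}$.

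The main obstacle I anticipate is justifying cleanly that the connected sum with $S$ on the disk level corresponds to a composition with a cobordism inducing the identity — i.e. pinning down exactly which neck-cutting / connected-sum formula from the link Floer TQFT to invoke and checking that the decoration $\sigma$ genuinely plays no role (so that the argument is not just "morally" a stabilization). Once that identification is made precise, the rest is formal functoriality.
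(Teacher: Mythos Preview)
Your overall strategy---factor the concordance so that the $2$-knot contribution is isolated, then show that piece induces the identity---is correct, but you have made the argument harder than necessary by puncturing at a point of $D$ and factoring through a self-concordance $W$ of $K$. Showing $F_W = \Id_{\HFKh(K,P)}$ is genuinely nontrivial when $\HFKh(K,P)$ has dimension greater than one, and neither of your two proposed justifications is complete: the ``known formula'' you allude to is not a standard citation in this form, and the ``push into the collar'' sentence is garbled (you say $W$ becomes a product precomposed with a map on $\HFKh(U,P_U)$, but $W$ is a cobordism from $K$ to $K$, so no unknot appears unless you first slide $S$ all the way past $C$---which is a different factorization than the one you set up).

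The paper avoids this entirely by puncturing $D\#S$ at a point of $S$ rather than of $D$. The resulting concordance from $(U,P_U)$ to $(K,P)$ then factors as $\cC \circ \cC_S$, where $\cC_S$ is a decorated concordance from $(U,P_U)$ to itself and $\cC$ is the concordance coming from $D$. Now $F_{\cC_S}$ is an endomorphism of $\HFKh(U,P_U)\cong\bF_2$, and it is nonzero by the non-vanishing of concordance maps \cite{JMConcordance}*{Corollary~1.3}; hence $F_{\cC_S}=\Id$, and $t_{D\#S,P} = F_\cC(F_{\cC_S}(1)) = F_\cC(1) = t_{D,P}$. The point is that factoring through the unknot, where the Floer group is one-dimensional, makes ``nonzero $\Rightarrow$ identity'' automatic---exactly the step you were struggling to supply on $\HFKh(K,P)$. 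Your ``argue directly'' paragraph is groping toward this, but you should make the factorization through the unknot explicit rather than routing through $W$.
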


\begin{proof}
  Remove a ball from $D^4$ about a point of $S$ that intersects $D \# S$
  in a disk, and choose decorations compatible with $P_U$ and $P$.
  Then the resulting decorated concordance from $(U, P_U)$ to $(K, P)$ is the composition $\cC \circ \cC_S$, where
  $\cC = (C, \sigma)$ is a decorated concordance from $(U, P_U)$ to $(K, P)$,
  and $\cC_S = (C_S, \sigma_S)$ is a decorated concordance from $(U, P_U)$ to itself with $|\sigma_S| = 2$.
  The map
  \[
  F_{\cC_S} \colon \HFKh(U, P_U) \to \HFKh(U, P_U)
  \]
  is non-vanishing by \cite[Corollary~1.3]{JMConcordance}, and $\HFKh(U, P_U) \cong \bF_2$, hence
  \[
  t_{D \# S, P} = F_{\cC \circ \cC_S}(1) =
  F_{\cC} \circ F_{\cC_S}(1) = F_\cC(1) = t_{D, P},
  \]
  as claimed.
\end{proof}

With the help of our work joint with Ghiggini~\cite{GJOrientations}
on constructing canonical orientation systems in sutured Floer homology,
we will be able to lift the above constructions and results from $\bF_2$ to $\Z$ coefficients.

\section{Invariants of deform-spun slice disks}

It has been an open problem \cite{JMConcordance}*{Question~1.4}
whether there is a decorated knot $(K, P)$ with two slice disks $D$ and $D'$ such that $t_{D,P} \neq t_{D',P}$.
In this section, we give an affirmative answer to this question, by giving a formula for the invariant
of a deform-spun slice disk. Before we state our theorem, we recall some notation from \cite[Section~5.1]{ZConnected}.

Let $(K, P)$ and $(K', P')$ be decorated knots in $S^3$, and pick points $p \in K \setminus P$
and $p' \in K' \setminus P'$. Furthermore, we choose positive normal framings $(v_1, v_2, v_3)$ of $p$ and
$(v_1', v_2', v_3')$ of $p'$ such that $v_1$ is positively tangent to $K$ and $v_1'$ is positively
tangent to $K'$. We now construct a decorated concordance
\[
(W,\cF) \colon (-S^3, -K, P) \sqcup (S^3, K', P') \to (S^3, -K \# K', P)
\]
corresponding to taking the connected sum along $-p$ and $p'$; see Figure~\ref{fig:2}. The cobordism $W$ is
obtained by attaching a 4-dimensional 1-handle $D^1 \times D^3$ to $I \times (-S^3 \sqcup S^3)$
along $\{(1, p), (1, p')\}$. The underlying surface $F$ of $\cF = (F, \sigma)$ is obtained by attaching
the 2-dimensional 1-handle
\[
D^1 \times \{(t, 0, 0) : t \in [-1,1]\} \subset D^1 \times D^3
\]
to $I \times (-K \sqcup K)$ along $\{(1, p), (1, p')\}$.
Let $q$ be a point in the component of $K \setminus P$ not containing $p$,
and Let $q'$ be a point in the component of $K \setminus P'$ not containing $p'$.
Then the decoration $\sigma$ on $F$ is the union of $I \times \{p, p', q, q'\}$
and the core of the 2-dimensional 1-handle, and consists of three arcs.

\begin{comment}
When $K = K'$, then we can give a slightly simpler description of $(W, \cF)$.
Choose a decoration $\sigma = I \times \{x, y\}$ of the concordance
$(I \times S^3, I \times K)$, where $x$ and $y$ are points on the two components of $K \setminus P$.
We define $W$ to be a neighborhood of $(\{0,1\} \times S^3) \cup (I \times \{x\})$ in $I \times S^3$,
and $\cF$ to be the intersection of $(I \times K,\sigma)$ with $W$. The 4-manifold $W$
is a 1-handle cobordism, and the underlying surface of $\cF$ consists of a
saddle that merges $-K$ and $K$.
\end{comment}

Let us write
\[
G \colon \HFKh(-K, P) \otimes \HFKh(K', P') \to \HFKh(-K \# K', P)
\]
for the cobordism map induced by $(W, \cF)$.
According to \cite{ZConnected}*{Proposition~5.1}, the map $G$ is an isomorphism, whose
inverse $E$ is the cobordism map induced by $(W', \cF')$,
obtained by turning around and reversing the orientation of $(W,\cF)$.
Note that, given $-K \# K'$, the isomorphism $E$ depends on the connected sum sphere,
which is the belt sphere of the 1-handle described above.
We then have the following formula for the invariant of a slice disk obtained by deform-spinning.

\begin{comment}
\begin{lem}\label{lem:decorations}
  Let $(K, P)$ and $(K', P')$ be decorated knots in $S^3$, and write
  $V = \HFKh(K, P)$ and $V' = \HFKh(K', P')$. Furthermore, let $D$ be a slice disk for $-K \# K'$.
  Then the element
  \[
  E(t_{D, P}) \in V^* \otimes V'
  \]
  is independent of the choice of decoration $\sigma$ on $F$.
\end{lem}

\begin{proof}
  Recall that $t_{D, P}$ is defined by removing a ball from $D^4$ about a point of $D$,
  choosing a decoration $\nu$ on the resulting concordance $C$ from $(U, P_U)$ to $(-K \# K', P)$,
  and setting $t_{D, P} = F_{(C, \nu)}(1)$. By the functoriality of the decorated link cobordism maps
  in link Floer homology,
  \[
  E(t_{D, P}) = F_{(W', \cF')} \circ F_{(C, \nu)}(1) = F_{(W', \cF') \circ (C, \nu)}(1);
  \]
  see Figure~\ref{fig:2}.
  If $\sigma'$ is another decoration on $F$, then it differs from $\sigma$ by winding it around $-K \# K'$.
  Consequently, for a suitable choice of decoration $\nu'$ on $C$, the decorations $\sigma \cup \nu$
  and $\sigma' \cup \nu'$ are isotopic in $C \cup F$ relative to their boundaries. As $t_{D, P}$ is independent
  of the choice of $\nu$, the result follows.
\end{proof}
\end{comment}

\begin{thm}\label{thm:spinning}
  Let $K$ be a knot in $S^3$, and suppose that $B$ is an open 3-ball about a point of $K$ that intersects $K$
  in an unknotted arc.
  Let $D_{K, d}$ be a slice disk of the knot $-K \# K$, obtained by deform-spinning $K$
  using a diffeomorphism $d \in \Diff(S^3, K)$ that is the identity on $B$. If we write $V := \HFKh(K,P)$, then
  \[
  E(t_{D_{K, d}, P}) = d_* \in \Hom(V,V) \cong V^* \otimes V,
  \]
  where the isomorphism  $E \colon \HFKh(-K \# K, P) \to V^* \otimes V$ is described above.
\end{thm}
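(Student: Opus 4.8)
The plan is to reduce the computation to the functoriality of the link cobordism maps together with the concordance-map description of the slice disk invariant. First I would set up the relevant concordances. By the alternative description of $D_{K,d}$ given before Lemma~\ref{lem:rolling} (and its generalization in Definition~\ref{def:conc}), the slice disk $D_{K,d}$ is obtained from the decorated concordance $\cC = (I\times S^3, I\times K, \sigma)$ — where $\sigma$ contains the arc $A$ over the basepoint region $B$ — by ambient-isotoping $A$ to $I\times\{x\}$ via the diffeomorphism $d$ of $S^3$ and removing a neighborhood of $I\times\{x\}$. Concretely, $d$ acts on the concordance $I\times K$ by $(t,y)\mapsto(t,\phi_t(y))$ for an isotopy $\phi$ from $\Id$ to $d$, so the resulting concordance from $(U,P_U)$ to $(-K\# K,P)$ used to define $t_{D_{K,d},P}$ differs from the one for $D_{K,\Id}$ by precomposition with (the cobordism map of) the mapping cylinder of $d$.

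Next I would apply $E$ and use functoriality. Writing $t_{D_{K,d},P} = F_{(C,\nu)}(1)$ for a decorated concordance $(C,\nu)$ from $(U,P_U)$ to $(-K\# K,P)$ built by removing a ball from $D_{K,d}$, functoriality of the decorated link cobordism maps gives
\[
E\bigl(t_{D_{K,d},P}\bigr) = F_{(W',\cF')}\circ F_{(C,\nu)}(1) = F_{(W',\cF')\circ(C,\nu)}(1).
\]
The composite decorated cobordism $(W',\cF')\circ(C,\nu)$ from $(U,P_U)$ to $(-K,P)\sqcup(K,P)$ can be identified: splitting $-K\# K$ back along the connected-sum sphere $S$ undoes the $1$-handle, and what remains is a pair of concordances, one a trivial concordance of $(U,P_U)$ to a decorated $(-K,P)$-capping piece, and the other the mapping cylinder of $d$ acting on $I\times K$. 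Thus the composite is (up to the diffeomorphism-invariance provided by Lemma~\ref{lem:diff} and the independence of decorations) the standard $1$-handle creation cobordism followed by $\id\times(\text{mapping cylinder of }d)$. Since the map attached to the mapping cylinder of $d$ is by definition the induced map $d_*$ on $\HFKh(K,P)=V$ (naturality, \cite{JTNaturality}), and the $1$-handle piece with the "diagonal" $2$-handle decoration induces the coevaluation $\bF_2\to V^*\otimes V$ sending $1\mapsto \sum_i e_i^*\otimes e_i$ (this is exactly the content of the inverse $E$ of $G$ from \cite{ZConnected}*{Proposition~5.1}), the composite sends $1\mapsto \sum_i e_i^*\otimes d_*(e_i)$, which under $V^*\otimes V\cong\Hom(V,V)$ is precisely $d_*$.

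Finally I would clean up the naturality subtleties. One needs that the arc $A$ over $B$ in $\sigma$ is isotoped to $I\times\{x\}$ \emph{with the framing conventions} of the paragraph before Lemma~\ref{lem:rolling}, so that the induced diffeomorphism of $D^4$ is well-defined up to $\Diff(D^4,S^3)$ and hence, by Lemma~\ref{lem:diff}\eqref{it:1}, does not affect the isotopy class of the slice disk or the invariant; this is what lets me replace $D_{K,d}$ by the concrete model $d_l((I\times K)\setminus N(A_l))$-style description and read off the concordance map as a mapping-cylinder map. I would also invoke the independence of $t_{D,P}$ from the decoration $\nu$ (and of $E$ from the auxiliary decoration $\sigma$ on $F$, as in the commented-out Lemma~\ref{lem:decorations}) to ensure the matching of decorations $\sigma\cup\nu$ on the composite is automatic after an isotopy. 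The main obstacle I anticipate is precisely this bookkeeping: identifying the composite decorated cobordism $(W',\cF')\circ(C,\nu)$ on the nose with the mapping cylinder of $d$ (tensored with the $1$-handle cocreation piece) requires carefully tracking how the connected-sum $1$-handle, the ball removed to define $t$, and the neighborhood of $I\times\{x\}$ removed to form $D_{K,d}$ interact — in particular verifying that no extra twisting of the decoration or of the $S^1$-factor is introduced, which is where the framing hypothesis on $\nu|_{A_l}$ (odd framing, $\pi_1(\SO(3))=\Z_2$) does the real work. Once the cobordisms are matched, the algebra is immediate from \cite{ZConnected}*{Proposition~5.1} and the definition of $d_*$.
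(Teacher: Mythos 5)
Your overall plan matches the paper's proof closely: decompose the composite decorated cobordism $(W',\cF')\circ(S^3\times I,C_\phi)$ as $T_\phi \circ V_K$, where $V_K$ is the cotrace cobordism of $K$ with a ball removed along the dividing set, and $T_\phi$ is the trace of the isotopy $\Id_{S^3}\sqcup\phi$; then compute the two pieces separately and combine via functoriality. Your identification of the mapping-cylinder piece with $\Id_{V^*}\otimes d_*$ via naturality is correct, and the detour through Definition~\ref{def:conc}, Lemma~\ref{lem:rolling}, and the odd-framing bookkeeping is unnecessary --- the paper works directly with $D_\phi$ and never needs the framed-arc model for this theorem.

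The one genuine problem is in the justification of the crucial computational step: you assert that the $1$-handle/cotrace piece $V_K$ induces the coevaluation $1\mapsto\cotr_V(1)$ ``because this is exactly the content of the inverse $E$ of $G$ from \cite{ZConnected}*{Proposition~5.1}.'' That proposition only says $G$ is an isomorphism whose inverse is $E=F_{(W',\cF')}$; it identifies two maps abstractly but does not compute $F_{V_K}(1)$. Knowing $E$ is an isomorphism tells you nothing about the image of $1\in\bF_2$ under the specific cotrace-type cobordism. The identity $F_{V_K}(1)=\cotr_V(1)$, naturally identified with $\Id_V\in\Hom(V,V)$, is precisely \cite{JuhaszZemkeContactHandles}*{Theorem~1.1} --- and the introduction of this paper explicitly singles out that theorem as the technical ingredient that made computing $t_{D,P}$ for any non-trivial slice disk possible at all. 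Without invoking it, the argument does not close.
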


\begin{proof}
  Choose an isotopy $\phi$ of $S^3$ such that $\phi_0 = \Id_{S^3}$ and $\phi_1 = d$.
  By definition, $D_{K, d} = D_{\phi}$. Let $V_K$ be the link cobordism obtained from the
  cotrace cobordism $(I \times S^3, I \times K, \sigma)$ from $(\emptyset, \emptyset, \emptyset)$
  to $(-S^3, -K, P) \sqcup (S^3, K, P)$ by removing a ball about a point in $\sigma = I \times \{x, y\}$,
  where $x$ and $y$ are points from each component of $K \setminus P$.
  Furthermore, consider the concordance $(I \times S^3, C_\phi)$ from $(S^3, U)$ to $(S^3, K)$, obtained
  by removing a ball from $D^4$ about a point of the slice disk $D_{\phi}$.
  Choose an arbitrary decoration on $C_\phi$ consisting of two
  arcs, compatible with $P_U$ and $P$. Finally, let $T_{\phi}$ be the cobordism from $(-S^3, -K, P) \sqcup (S^3, K, P)$
  to itself that is the trace of the isotopy $\Id_{S^3} \sqcup \phi$. Then we have
  \begin{equation}\label{eqn:decomp}
  (W',\cF') \circ (S^3 \times I, C_\phi) = T_\phi \circ V_K.
  \end{equation}
  The left-hand side is obtained from $(W',\cF') \circ (D^4, D_{\phi})$,
  which is shown  schematically on the left of Figure~\ref{fig:2}, 
  by removing a 4-ball along the dividing set of $D_{\phi}$. 
  The right-hand side is obtained from the right of Figure~\ref{fig:2},
  the composition of the cotrace cobordism of $K$ with $T_\phi$,
  by removing a 4-ball along the dividing set of the cotrace cobordism.

\begin{figure}[ht!]
  \centering
  %% Creator: Inkscape inkscape 0.92.3, www.inkscape.org
%% PDF/EPS/PS + LaTeX output extension by Johan Engelen, 2010
%% Accompanies image file '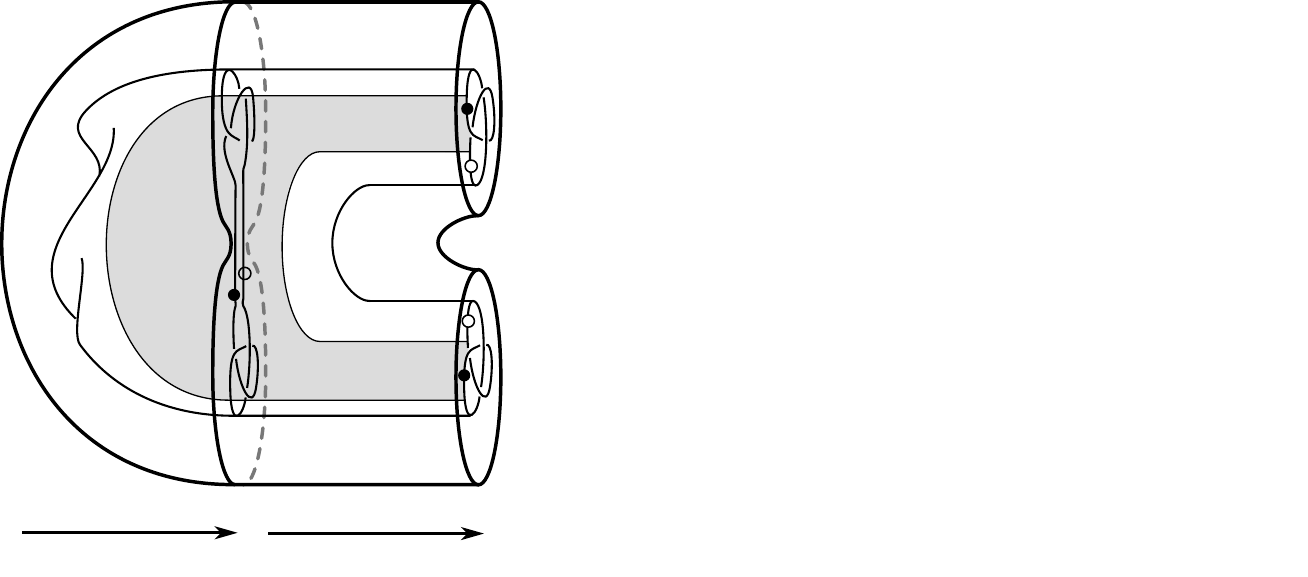' (pdf, eps, ps)
%%
%% To include the image in your LaTeX document, write
%%   \input{<filename>.pdf_tex}
%%  instead of
%%   \includegraphics{<filename>.pdf}
%% To scale the image, write
%%   \def\svgwidth{<desired width>}
%%   \input{<filename>.pdf_tex}
%%  instead of
%%   \includegraphics[width=<desired width>]{<filename>.pdf}
%%
%% Images with a different path to the parent latex file can
%% be accessed with the `import' package (which may need to be
%% installed) using
%%   \usepackage{import}
%% in the preamble, and then including the image with
%%   \import{<path to file>}{<filename>.pdf_tex}
%% Alternatively, one can specify
%%   \graphicspath{{<path to file>/}}
%% 
%% For more information, please see info/svg-inkscape on CTAN:
%%   http://tug.ctan.org/tex-archive/info/svg-inkscape
%%
\begingroup%
  \makeatletter%
  \providecommand\color[2][]{%
    \errmessage{(Inkscape) Color is used for the text in Inkscape, but the package 'color.sty' is not loaded}%
    \renewcommand\color[2][]{}%
  }%
  \providecommand\transparent[1]{%
    \errmessage{(Inkscape) Transparency is used (non-zero) for the text in Inkscape, but the package 'transparent.sty' is not loaded}%
    \renewcommand\transparent[1]{}%
  }%
  \providecommand\rotatebox[2]{#2}%
  \newcommand*\fsize{\dimexpr\f@size pt\relax}%
  \newcommand*\lineheight[1]{\fontsize{\fsize}{#1\fsize}\selectfont}%
  \ifx\svgwidth\undefined%
    \setlength{\unitlength}{376.6158973bp}%
    \ifx\svgscale\undefined%
      \relax%
    \else%
      \setlength{\unitlength}{\unitlength * \real{\svgscale}}%
    \fi%
  \else%
    \setlength{\unitlength}{\svgwidth}%
  \fi%
  \global\let\svgwidth\undefined%
  \global\let\svgscale\undefined%
  \makeatother%
  \begin{picture}(1,0.44082544)%
    \lineheight{1}%
    \setlength\tabcolsep{0pt}%
    \put(0,0){\includegraphics[width=\unitlength,page=1]{fig1_new.pdf}}%
    \put(0.04738912,0.00219367){\color[rgb]{0,0,0}\makebox(0,0)[lt]{\lineheight{0}\smash{\begin{tabular}[t]{l}$(D^4,D_\phi)$\end{tabular}}}}%
    \put(0.24223445,0.00219367){\color[rgb]{0,0,0}\makebox(0,0)[lt]{\lineheight{0}\smash{\begin{tabular}[t]{l}$(W',\cF')$\end{tabular}}}}%
    \put(0,0){\includegraphics[width=\unitlength,page=2]{fig1_new.pdf}}%
    \put(0.65478756,0.00219367){\color[rgb]{0,0,0}\makebox(0,0)[lt]{\lineheight{0}\smash{\begin{tabular}[t]{l}$(I \times S^3, I \times K, \sigma)$\end{tabular}}}}%
    \put(0.92941901,0.00219367){\color[rgb]{0,0,0}\makebox(0,0)[lt]{\lineheight{0}\smash{\begin{tabular}[t]{l}$T_\phi$\end{tabular}}}}%
    \put(0,0){\includegraphics[width=\unitlength,page=3]{fig1_new.pdf}}%
  \end{picture}%
\endgroup%

  \caption{On the left, the cobordism $(W',\cF') \circ (D^4, D_{\phi})$. The concordance
  $(S^3 \times I, C_\phi)$ is obtained from $(D^4, D_{\phi})$ by removing a
  4-ball along the dividing set of $D_{\phi}$. On the right, the cotrace cobordism of $K$
  composed with $T_\phi$, which gives rise to $V_K \circ T_\phi$ after removing a 4-ball.
  \label{fig:2}}
\end{figure}

  By \cite{JuhaszZemkeContactHandles}*{Theorem~1.1}, the link cobordism map
  \[
  F_{V_K} \colon \bF_2 \to V^* \otimes V \cong \Hom(V,V)
  \]
  maps $1 \in \bF_2$ to $\cotr_V(1) \in V^* \otimes V$,
  which is naturally identified with $\Id_V \in \Hom(V,V)$.
  Furthermore, $F_{T_\phi} = \Id_{V^*} \otimes d_*$.
  As $F_{C_\phi}(1) = t_{D_\phi,P}$, by applying the functor $\SFH$ to equation~\eqref{eqn:decomp},
  we obtain that
  \[
  E(t_{D_\phi,P}) = (\Id_{V^*} \otimes d_*) \circ F_{V_K}(1) =
  (\Id_{V^*} \otimes d_*) \circ \cotr_V(1).
  \]
  Now $(\Id_{V^*} \otimes d_*) \circ \cotr_V(1) \in V^* \otimes V$ is
  naturally identified with $d_* \in \Hom(V,V)$, and the result follows.
\end{proof}

\begin{cor}
  Let $(K, P)$ be a decorated knot in $S^3$, and $B$ an open 3-ball about a point of $K$ that intersects it in an unknotted arc.
  If $d$, $d' \in \Diff(S^3, K)$ are the identity on $B$,
  then $t_{D_{K, d}, P} = t_{D_{K, d'}, P}$ if and only if $d_* = d'_*$ on $\HFKh(K, P)$.
\end{cor}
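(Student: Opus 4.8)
The plan is to read this off directly from Theorem~\ref{thm:spinning}. The one observation to make first is that, since $d$ and $d'$ both restrict to the identity on the chosen ball $B$, the deform-spun slice disks $D_{K, d}$ and $D_{K, d'}$ are constructed inside the same copy of $D^4 \cong (D^3 \times I) \setminus N(B)$, carrying the same connected sum sphere; hence a single isomorphism
\[
E \colon \HFKh(-K \# K, P) \to V^* \otimes V \cong \Hom(V, V)
\]
serves for both, namely the one described before Theorem~\ref{thm:spinning}. Indeed, the construction of $E$ depends only on this ambient connected sum data, and not on the diffeomorphism used to spin.

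Granting this, the argument is a two-line chain. Because $E$ is an isomorphism, and in particular injective, we have $t_{D_{K, d}, P} = t_{D_{K, d'}, P}$ if and only if $E(t_{D_{K, d}, P}) = E(t_{D_{K, d'}, P})$. By Theorem~\ref{thm:spinning}, the left side of this latter equation is $d_*$ and the right side is $d'_*$, both viewed in $\Hom(V, V) \cong V^* \otimes V$. Combining the two equivalences yields that $t_{D_{K, d}, P} = t_{D_{K, d'}, P}$ if and only if $d_* = d'_*$ on $\HFKh(K, P)$, as claimed.

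Since every step merely invokes a result already in hand — the computation of $E(t_{D_{K, d}, P})$ from Theorem~\ref{thm:spinning}, and the fact that $E$ is an isomorphism from \cite{ZConnected}*{Proposition~5.1} — there is no genuine obstacle here. The only point worth spelling out, and the one I would be most careful about, is the remark above that $E$ is literally the same map for the two slice disks; this is what licenses comparing $E(t_{D_{K, d}, P})$ and $E(t_{D_{K, d'}, P})$ directly, and hence passing from an identity of Floer-homological elements to an identity of maps $d_*$ and $d'_*$.
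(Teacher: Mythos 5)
Your proposal is correct and matches the argument the paper leaves implicit: the corollary is stated without proof, as an immediate consequence of Theorem~\ref{thm:spinning} and the injectivity of $E$, which is precisely the chain you spell out. Your extra care in checking that the same $E$ applies to both slice disks (because $d$ and $d'$ both fix $B$, so the connected sum sphere is shared) is a worthwhile point to make explicit, and it is indeed what makes the direct comparison legitimate.
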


\begin{prop}
  Let $(K, P)$ be a decorated knot in $S^3$, and let $D_{K,r}$ be the slice disk obtained
  by $1$-roll-spinning $K$. Then $t_{D_{K, \Id}} = t_{D_{K, r}}$ if and only if the basepoint moving
  map $\Id_{\HFKh(K,P)} + \Phi\Psi$, defined by Sarkar~\cite{SarkarMovingBasepoints}, is the identity.
\end{prop}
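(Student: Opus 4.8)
The plan is to read this off directly from Theorem~\ref{thm:spinning} (equivalently, from the Corollary immediately preceding the statement) together with Sarkar's formula recalled in Section~\ref{sec:invariant}. Set $V := \HFKh(K, P)$. Both $\Id_{S^3}$ and the roll-spin diffeomorphism $r$ of Definition~\ref{def:rolling} are automorphisms of $(S^3, K)$ that restrict to the identity on an open ball $B$ meeting $K$ in an unknotted arc; for $r$ this is the content of the last sentence of Definition~\ref{def:rolling}, and for $\Id_{S^3}$ it is trivial. Hence Theorem~\ref{thm:spinning} applies to each of $d = \Id_{S^3}$ and $d = r$, yielding
\[
E(t_{D_{K, \Id}, P}) = (\Id_{S^3})_* = \Id_V \qquad\text{and}\qquad E(t_{D_{K, r}, P}) = r_*
\]
in $\Hom(V, V) \cong V^* \otimes V$, where $E \colon \HFKh(-K \# K, P) \to V^* \otimes V$ is the isomorphism from that theorem.

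Next I would use that $E$ is an isomorphism, hence in particular injective: this gives $t_{D_{K, \Id}} = t_{D_{K, r}}$ if and only if $\Id_V = r_*$ as endomorphisms of $V$. Finally, the induced map of the positive Dehn twist $r$ along $K$ on knot Floer homology is, by the formula of Sarkar quoted in Section~\ref{sec:invariant}, exactly the basepoint moving map $r_* = \Id_V + \Phi\Psi$, where $\Phi$ and $\Psi$ are the commuting endomorphisms of $V$ with $\Phi^2 = \Psi^2 = 0$. Thus $\Id_V = r_*$ holds if and only if $\Phi\Psi = 0$, i.e. if and only if $\Id_V + \Phi\Psi = \Id_V$, which is the assertion that the basepoint moving map is the identity. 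This chain of equivalences is exactly the claim.

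The argument is essentially bookkeeping once Theorem~\ref{thm:spinning} is available, so there is no serious obstacle; the one point that must be handled carefully is the matching of conventions --- namely, that the roll-spin diffeomorphism $r$ of Definition~\ref{def:rolling} is precisely the positive Dehn twist along $K$ whose action on $\HFKh(K, P)$ Sarkar computed, so that the two occurrences of ``$r_*$'' above refer to the same map $\Id_V + \Phi\Psi$. This identification has already been fixed in Section~\ref{sec:invariant}, where the Dehn twist is explicitly cross-referenced to Definition~\ref{def:rolling}, so in the write-up I would simply present the result as an immediate corollary of Theorem~\ref{thm:spinning} and Sarkar's formula.
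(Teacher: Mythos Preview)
Your proposal is correct and follows essentially the same route as the paper: apply Theorem~\ref{thm:spinning} to identify $E(t_{D_{K,\Id}})$ with $\Id_V$ and $E(t_{D_{K,r}})$ with $r_*$, then invoke Sarkar's formula $r_* = \Id_V + \Phi\Psi$ for the basepoint moving map. Your version is in fact slightly more explicit than the paper's in noting that $E$ is an isomorphism to pass from equality of the $t$-invariants to equality of the induced maps.
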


\begin{proof}
  By Theorem~\ref{thm:spinning}, we have $t_{D_{K, \Id}} = (\Id_{S^3})_* = \Id_V$ and $t_{D_{K, r}} = r_*$.
  According to Definition~\ref{def:rolling}, the diffeomorphism $r$ is a positive Dehn twist along $K$.
  Consequently, $r_*$ is the basepoint moving map.
  By Sarkar~\cite{SarkarMovingBasepoints}*{Theorem~1.1}, the latter is $\Id_V + \Phi \Psi$.
\end{proof}

According to \cite{SarkarMovingBasepoints}*{Section~6}, the map $\Phi\Psi$ is non-vanishing
for the majority of prime knots up to nine crossings.
For example, by \cite{SarkarMovingBasepoints}*{Theorem~6.1}, this holds for any alternating knot
$K$ such that
\[
t^{\frac{\sigma(K)}{2}} \Delta_K(t) \neq \frac{1+ t^{2m+1}}{1+t}
\]
for any integer $m$. Clearly, there are infinitely many such knots $K$.
Hence, in all these cases, $D_{K, \Id}$ and $D_{K, r}$ are stably non-isotopic slice disks of $K$.
Furthermore, they have diffeomorphic complements by Proposition~\ref{prop:complement}.

Note that $(\Phi\Psi)^2 = 0$. Hence, if the formula $\Id_V + \Phi \Psi$ also holds for
the basepoint moving map $r_*$ over the integers, then
\[
t_{D_{K, r^l}} = (\Id_V + \Phi \Psi)^l = \Id_V + l \cdot \Phi \Psi.
\]
Consequently, all the slice disks $D_{K, r^l}$ for $l \in \Z$ would be pairwise stably non-isotopic.
Hence, with this caveat, we have obtained the following result:

\begin{thm}\label{thm:rolling}
  Assume Sarkar's basepoint moving formula holds over the integers, and let $K$ be an alternating knot such that
  \[
  t^{\frac{\sigma(K)}{2}} \Delta_K(t) \neq \frac{1+ t^{2m+1}}{1+t}
  \]
  for any integer $m$. Then the slice disks $D_{K, r^l}$ for $l \in \Z$, obtained by roll-spinning, are
  distinguished by $t_{D_{K, r^l}}$ up to stable isotopy, but have diffeomorphic complements.
\end{thm}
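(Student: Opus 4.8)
The plan is to combine three ingredients that are already in place: the formula $E(t_{D_{K,d},P}) = d_*$ from Theorem~\ref{thm:spinning}, Sarkar's formula $r_* = \Id_V + \Phi\Psi$ with $\Phi^2 = \Psi^2 = 0$ and $\Phi$, $\Psi$ commuting, and the fact that $\Phi\Psi$ is non-vanishing for the class of alternating knots specified in \cite{SarkarMovingBasepoints}*{Theorem~6.1}. First I would record that, under the standing assumption that Sarkar's formula holds over $\Z$, one has $(r^l)_* = (r_*)^l = (\Id_V + \Phi\Psi)^l$; since $(\Phi\Psi)^2 = (\Phi\Psi)(\Phi\Psi)$ and $\Phi$, $\Psi$ commute with $\Phi^2 = \Psi^2 = 0$, every term of the binomial expansion beyond the linear one vanishes, so $(r^l)_* = \Id_V + l\cdot\Phi\Psi$. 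Applying the isomorphism $E$ and Theorem~\ref{thm:spinning} (with $d = r^l$), we get $E(t_{D_{K,r^l},P}) = \Id_V + l\cdot\Phi\Psi$ in $\Hom(V,V)$.

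Next I would observe that $E$ is an isomorphism (this is part of the setup preceding Theorem~\ref{thm:spinning}, via \cite{ZConnected}*{Proposition~5.1}), so $t_{D_{K,r^l},P} = t_{D_{K,r^m},P}$ if and only if $\Id_V + l\cdot\Phi\Psi = \Id_V + m\cdot\Phi\Psi$, i.e. if and only if $(l-m)\cdot\Phi\Psi = 0$ as an endomorphism of $V$. For the alternating knots $K$ in the hypothesis, \cite{SarkarMovingBasepoints}*{Theorem~6.1} guarantees $\Phi\Psi \neq 0$; since $V = \HFKh(K,P)$ is a free $\Z$-module (or at worst its relevant graded piece is torsion-free — but in fact over $\Z$ one argues that a nonzero integer multiple of a nonzero map between free abelian groups is nonzero), we conclude $(l-m)\cdot\Phi\Psi \neq 0$ whenever $l \neq m$. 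Hence the elements $t_{D_{K,r^l},P}$ are pairwise distinct for distinct $l \in \Z$, and since $t_{D,P}$ is a stable-isotopy invariant (shown in Section~\ref{sec:invariant}), the slice disks $D_{K,r^l}$ are pairwise stably non-isotopic. Finally, Proposition~\ref{prop:complement} gives that all the $D_{K,r^l}$ have diffeomorphic complements, completing the proof.

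The main obstacle is really bookkeeping rather than a genuine difficulty: one must be careful that Theorem~\ref{thm:spinning} and Sarkar's formula are both being invoked over $\Z$, which is why the theorem is stated conditionally on Sarkar's integral formula; the lift of Theorem~\ref{thm:spinning} to $\Z$ coefficients uses the canonical orientation systems of \cite{GJOrientations}, as remarked in Section~\ref{sec:invariant}. A secondary point to verify is that $\Phi\Psi \neq 0$ as an integral endomorphism, not merely mod $2$ — here one uses that $\HFKh$ with $\Z$ coefficients surjects onto the $\F_2$ version, so non-vanishing mod $2$ forces non-vanishing over $\Z$, and then that multiplication by a nonzero integer is injective on the free part. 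Given the hypothesis on the Alexander polynomial and signature, \cite{SarkarMovingBasepoints}*{Theorem~6.1} supplies the mod~$2$ non-vanishing, and there are clearly infinitely many alternating knots satisfying $t^{\sigma(K)/2}\Delta_K(t) \neq (1+t^{2m+1})/(1+t)$, so the family is genuinely infinite.
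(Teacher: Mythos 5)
Your proposal is correct and follows the same route as the paper: apply Theorem~\ref{thm:spinning} with $d = r^l$, use $(\Phi\Psi)^2 = 0$ to collapse $(\Id_V + \Phi\Psi)^l$ to $\Id_V + l\cdot\Phi\Psi$, invoke Sarkar's Theorem~6.1 for non-vanishing of $\Phi\Psi$, and cite Proposition~\ref{prop:complement} for the diffeomorphic complements. Your extra remarks about lifting the non-vanishing of $\Phi\Psi$ from $\F_2$ to $\Z$ (via compatibility with reduction of coefficients and freeness of $\HFKh$ for alternating knots) flesh out a point the paper leaves implicit, and the conclusion is sound.
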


Note that, if we work with $\bF_2$ coefficients, then we are only able to distinguish
$D_{K, r^l}$ for $l$ even and odd.

\section{Invariants of slice disks arising from concordances, and their rank}

We have the following straightforward generalization of Theorem~\ref{thm:spinning} to slice disks
arising from concordances as in Definition~\ref{def:conc}:

\begin{thm}\label{thm:concordance}
  Let $(K, P)$ and $(K', P')$ be decorated knots in $S^3$ that both pass through a point $x \in S^3$,
  and let $\cC = (C,\sigma)$ be a decorated concordance from $(K, P)$ to $(K', P')$, as in Definition~\ref{def:conc}.
  If we write $V := \HFKh(K, P)$ and $V' := \HFKh(K', P')$, then
  \[
  E(t_{D_{\cC}, P}) = F_{\cC} \in \Hom(V, V') \cong V^* \otimes V',
  \]
  where the isomorphism $E \colon \HFKh(-K \# K', P) \to V^* \otimes V'$ is described above.
\end{thm}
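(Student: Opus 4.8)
My plan is to adapt the proof of Theorem~\ref{thm:spinning} essentially verbatim, replacing the trace cobordism of the isotopy $\Id_{S^3}\sqcup\phi$ by the concordance $\cC$ itself.

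First I would assemble the three building blocks used there. Let $V_K$ be the cotrace cobordism $(I\times S^3, I\times K, \sigma)$ from $(\emptyset,\emptyset,\emptyset)$ to $(-S^3,-K,P)\sqcup(S^3,K,P)$ with a $4$-ball removed about a point of its dividing set, regarded as a concordance from $(S^3, U)$ to $(-S^3,-K,P)\sqcup(S^3,K,P)$. Let $C_{D_\cC}$ be the concordance from $(S^3,U)$ to $(S^3,-K\#K',P)$ obtained by deleting a $4$-ball from $(D^4, D_\cC)$ about a point of $D_\cC$ and choosing an arbitrary two-arc decoration compatible with $P_U$ and $P$, so that $F_{C_{D_\cC}}(1) = t_{D_\cC,P}$. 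Finally let $\Id_{-K}\sqcup\cC$ denote the cobordism from $(-S^3,-K,P)\sqcup(S^3,K,P)$ to $(-S^3,-K,P)\sqcup(S^3,K',P')$ that is the disjoint union of the product cobordism on $(-S^3,-K,P)$ with the decorated concordance $\cC$. These play the roles that $V_K$, $C_\phi$, and $T_\phi$ played in the proof of Theorem~\ref{thm:spinning}.

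The key step is the cobordism identity
\[
(W',\cF')\circ C_{D_\cC} = (\Id_{-K}\sqcup\cC)\circ V_K,
\]
the exact analogue of equation~\eqref{eqn:decomp}. To prove it I would unwind Definition~\ref{def:conc}: after the ambient isotopy taking $A$ to $I\times\{x\}$, the pair $(D^4, D_\cC)$ is the concordance $(I\times S^3, C)$ with a tubular neighborhood of the vertical arc $I\times\{x\}$ deleted, so gluing $(W',\cF')$ onto it fills that neighborhood back in with a $3$-handle and restores the deleted band, recovering $(I\times S^3, C)$ as a filling of $(-S^3,-K,P)\sqcup(S^3,K',P')$. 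On the other side, the cotrace cobordism of $K$ is $(I\times S^3, I\times K)$ viewed as a filling of $(-S^3,-K,P)\sqcup(S^3,K,P)$, and capping its $K$-end with $\cC$ merely glues a trivial collar onto $C$, producing the same filling of $(-S^3,-K,P)\sqcup(S^3,K',P')$. Deleting a $4$-ball about a point of the (connected) surface on each side, and matching the two-arc decoration on $C_{D_\cC}$ against the decoration on $V_K$ together with the decoration $\sigma$ of $\cC$, yields the identity; this is the manipulation depicted in Figure~\ref{fig:2}, now with the general concordance in place of $T_\phi$.

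Finally I would apply the link Floer TQFT. By \cite{JuhaszZemkeContactHandles}*{Theorem~1.1}, $F_{V_K}(1) = \cotr_V(1)\in V^*\otimes V$, which corresponds to $\Id_V$ under $\Hom(V,V)\cong V^*\otimes V$. By the monoidal property of the TQFT, and since the product cobordism on $(-S^3,-K,P)$ induces the identity of $V^* = \HFKh(-K,P)$, we get $F_{\Id_{-K}\sqcup\cC} = \Id_{V^*}\otimes F_\cC$. Since $F_{C_{D_\cC}}(1) = t_{D_\cC,P}$ and $E = F_{(W',\cF')}$, applying $F$ to the cobordism identity gives
\[
E(t_{D_\cC,P}) = (\Id_{V^*}\otimes F_\cC)\bigl(\cotr_V(1)\bigr),
\]
and the right-hand side is precisely the element of $V^*\otimes V'$ corresponding to $F_\cC\in\Hom(V,V')$ under $\Hom(V,V')\cong V^*\otimes V'$. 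This is the claimed formula. I expect the only real work to lie in the cobordism identity of the middle step — concretely, verifying that the arc $A$ and the framing condition of Definition~\ref{def:conc} are exactly what make the two decorated cobordisms coincide; once that is in hand, the TQFT computation is purely formal.
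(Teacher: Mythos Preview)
Your proposal is correct and is exactly the straightforward generalization of the proof of Theorem~\ref{thm:spinning} that the paper intends: replace $T_\phi$ by $\Id_{-K}\sqcup\cC$, establish the analogue of equation~\eqref{eqn:decomp}, and apply the TQFT. The paper does not spell out a separate proof for this theorem, so your write-up is precisely what is meant.
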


Consequently, if $\cC$ and $\cC'$ are decorated concordances from $(K, P)$ to $(K', P')$, then
$t_{D_{\cC}} = t_{D_{\cC'}}$ if and only if $F_{\cC} = F_{\cC'}$.

\begin{cor}\label{cor:split}
  Let $(K, P)$ and $(K', P')$ be decorated knots in $S^3$ that both pass through a point $x \in S^3$.
  If a slice disk $D$ of $-K \# K'$ is the boundary connected sum of slice disks
  of $K$ and $K'$, then $t_D \in \Hom(V,V')$ has rank one.
\end{cor}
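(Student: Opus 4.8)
The plan is to reduce the claim to a property of the concordance map, and then to use that this map factors through the knot Floer homology of the unknot, which is one-dimensional. By Lemma~\ref{lem:sum}, the slice disk $D$ of $-K \# K'$ equals $D_\cC$ for some decorated concordance $\cC = (C,\sigma)$ from $(K,P)$ to $(K',P')$, and by Theorem~\ref{thm:concordance} we have $E(t_D) = F_\cC \in \Hom(V,V')$. Since $E$ is an isomorphism, $t_D$ has rank one precisely when $F_\cC$ does, so it suffices to show $\rk(F_\cC) = 1$.

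The first substantive step is to read off the geometric meaning of the hypothesis. Exhibiting $D$ as a boundary connected sum of a slice disk of $K$ and a slice disk of $K'$ produces a properly embedded separating $3$-ball $H \subset D^4$, with $\partial H$ the connected sum sphere of $-K \# K'$, meeting $D$ in a single unknotted arc; one of the two $4$-balls cut off by $H$ carries (a copy of) the $K$-side slice disk and the other the $K'$-side slice disk. Feeding this decomposition through the construction of Definition~\ref{def:conc} and Lemma~\ref{lem:sum} --- that is, tracking the band and the $3$-handle reattachment, and keeping straight which of $K$ and $-K$ bounds which half --- one should obtain a factorization $\cC = \cC_2 \circ \cC_1$, where $\cC_1$ is a decorated concordance from $(K,P)$ to the decorated unknot $(U, P_U)$ obtained from the $K$-side slice disk, and $\cC_2$ is a decorated concordance from $(U, P_U)$ to $(K',P')$ obtained from the $K'$-side slice disk. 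This identification --- together with the bookkeeping of orientations and of the decorations on the two pieces --- is the part I expect to demand the most care, and I would accompany it with a picture in the style of Figure~\ref{fig:1}.

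Granting the factorization, functoriality of the concordance maps gives $F_\cC = F_{\cC_2} \circ F_{\cC_1}$, so $F_\cC$ factors through $\HFKh(U, P_U) \cong \bF_2$ and hence $\rk(F_\cC) \le 1$. On the other hand, every decorated concordance map is non-vanishing by~\cite{JMConcordance}, so $F_\cC \neq 0$ and therefore $\rk(F_\cC) = 1$. Combined with $E(t_D) = F_\cC$ from Theorem~\ref{thm:concordance}, this yields that $t_D \in \Hom(V,V')$ has rank one, as desired.
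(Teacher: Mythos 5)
Your proposal is correct and takes essentially the same route as the paper: invoke Lemma~\ref{lem:sum} to write $D = D_\cC$, observe that the boundary connected sum hypothesis forces $\cC$ to factor through a decorated unknot so that $F_\cC$ has rank at most one via $\HFKh(U, P_U) \cong \bF_2$, then use non-vanishing of concordance maps from \cite{JMConcordance} and Theorem~\ref{thm:concordance} to conclude. The paper states the factorization of $\cC$ just as tersely as you flag it, so you are not missing anything the authors spelled out.
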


\begin{proof}
  By Lemma~\ref{lem:sum}, there exists a decorated concordance $\cC$ from $(K, P)$ to $(K', P')$
  such that $D_\cC = D$. If $D$ is a boundary connected sum, then $\cC$ is the product of a concordance
  from $(K, P)$ to a decorated unknot $(U, P_U)$ with $|P_U| = 2$, and a concordance from $(U, P_U)$ to $(K', P')$.
  In particular, the concordance map $F_\cC$ factors through $\HFKh(U, P_U)$, which has rank one.
  Since $F_\cC$ is non-vanishing by \cite[Corollary~1.3]{JMConcordance}, it has rank one.
  The result now follows from Theorem~\ref{thm:concordance}.
\end{proof}

\begin{example}\label{ex:sum}
  This example illustrates that the rank of $t_D$ depends on the specific choice of connected sum sphere
  used in the identification of $\HFKh(-K \# K', P)$ with $\Hom(V, V')$. We construct a spun slice disk
  that is the boundary connected sum of slice disks; see Figure~\ref{fig:3}. Let $K_0$ and $K_1$ be knots,
  and set $K = -K_0 \# K_1$. Choose a decoration $P$ on $K$.
  Let $S$ be the connected sum sphere, write $S \cap K = \{p, q\}$, and let $B$ be
  a ball about $p$. We write $D$ for the slice disk obtained by spinning the arc $K \setminus B$.
  Then $D$ is a slice disk for $-K \# K = (K_0 \# {-}K_1) \# (-K_0 \# K_1)$. Using the connected sum sphere $\d B$
  for $-K \# K$, we obtain an identification of $\HFKh(-K \# K, P)$ with $V^* \otimes V$, where $V = \HFKh(K, P)$,
  such that $t_D$ corresponds to $\Id_V$, and hence has rank $\dim(V)$.

\begin{figure}
  \centering
  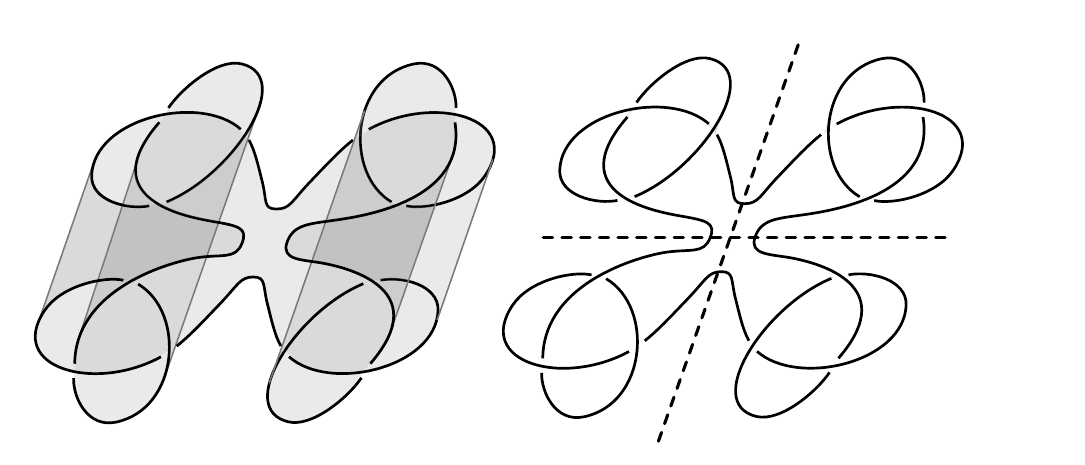
  \caption{The slice disk $D$ is obtained by spinning $-K_0 \# K_1$ about $\d B$.
  It splits as a boundary connected sum along the sphere $S$.
  \label{fig:3}}
\end{figure}

  On the other hand, we can also split $-K \# K$
  along the 2-sphere $S$ through $\{-q, q\}$, and $D$ decomposes as a boundary connected sum of the spun slice disks
  for $K_0 \# {-}K_0$ and $-K_1 \# K_1$ in this direction.
  Hence, $S$ gives an identification of $\HFKh(-K \# K, P)$ with $V_0^* \otimes V_1$,
  where $V_0 = \HFKh(-K_0 \# K_0, P_0)$ and $V_1 = \HFKh(-K_1 \# K_1, P_1)$, with respect to which $t_D$ has rank one
  by Corollary~\ref{cor:split}. When $K_0$ and $K_1$ are equivalent knots, then both $V_0$ and $V_1$ are isomorphic to $V$.
\end{example}

\begin{define}
  Let $(K, P)$ and $(K', P')$ be decorated knots in $S^3$,
  and consider a slice disk $D$ of the composite knot $(-K \# K', P)$
  with connected sum sphere $S$. Then we define the \emph{rank} $\rk_S(D)$ of $D$ with respect to $S$
  to be the rank of $t_{D, P}$, viewed as an element of $\Hom(V, V')$, where
  $V = \HFKh(K, P)$ and $V' = \HFKh(K', P')$, and we identify $\HFKh(-K \# K', P)$
  with $\Hom(V, V')$ using the connected sum sphere $S$.

  By Lemma~\ref{lem:sum}, there is a concordance $\cC$ from $K$ to $K'$ such that $D = D_\cC$.
  According to Theorem~\ref{thm:concordance}, $E(t_{D_\cC, P}) = F_\cC$,
  which preserves the Alexander and Maslov gradings by \cite[Theorem~1.2]{JMConcordance}.
  We can hence refine $\rk_S(D)$ by setting
  \[
  \rk_{S, j}(D, i) = \rk\left(F_\cC|_{\HFKh_j(K, i)}\right) \text{ and }
  \rk_S(D, i) = \rk\left(F_\cC|_{\HFKh(K, i)}\right).
  \]
\end{define}

\begin{lem}\label{lem:composite}
  Let $(K, P)$ and $(K', P')$ be decorated \emph{prime} knots in $S^3$ such that $K' \neq -K$.
  If $D$ is a slice disk for $-K \# K'$, then $\rk_S(D)$, $\rk_{S, j}(D, i)$, and
  $\rk_S(D, i)$ are independent of the choice of connected sum sphere $S$.
\end{lem}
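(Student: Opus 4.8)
The plan is to reduce the statement to the uniqueness of the connected sum sphere of $J:=-K\#K'$ up to isotopy, and then to convert that topological uniqueness into a statement about the isomorphism $E$ via functoriality of the link cobordism maps. Recall that by Lemma~\ref{lem:sum} we may write $D=D_\cC$ for a decorated concordance $\cC$ from $(K,P)$ to $(K',P')$, and by Theorem~\ref{thm:concordance} we have $E(t_{D,P})=F_\cC\in\Hom(V,V')$; so $\rk_S(D)=\rk(F_\cC)$ and its graded refinements are ranks of graded pieces of $F_\cC$, the dependence on $S$ being entirely in the identification $E=E_S$.

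\textbf{Uniqueness of the connected sum sphere.} Since $-K$ and $K'$ are prime and $-K$ is not isotopic to $K'$, any connected sum sphere for $J$ realises the \emph{same} prime factorisation and there is no reordering to perform; by Schubert's uniqueness theorem for prime decompositions of knots, any two connected sum spheres $S$, $S'$ differ by an ambient isotopy of $(S^3,J)$. Composing with an isotopy that slides the basepoints $P$ back along $J$ (possible since $\Diff_+(S^1)$ relative to two marked points is connected), we obtain $\psi\in\Diff_+(S^3,J,P)$ with $\psi(S)=S'$; moreover $\psi$ carries the $-K$-side ball--arc pair of $S$ onto that of $S'$, as it cannot interchange the two sides precisely because $-K\not\simeq K'$. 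This is exactly where the hypotheses ``prime'' and ``$K'\neq -K$'' are used, and where the analogue breaks down for $-K\#K$, cf.\ Example~\ref{ex:sum}.

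\textbf{Passing to Floer homology.} Capping the two balls with the trivial tangle, $\psi$ induces orientation-preserving diffeomorphisms $\bar\psi_1$ of $(S^3,-K,P)$ and $\bar\psi_2$ of $(S^3,K',P')$, hence bigrading-preserving automorphisms $(\bar\psi_1)_*$ of $\HFKh(-K,P)\cong V^*$ and $(\bar\psi_2)_*$ of $\HFKh(K',P')=V'$. Up to isotopy the cobordism $(W',\cF')$ computing $E_{S'}$ is the $\psi$-image of the one computing $E_S$, so functoriality of the decorated link cobordism maps yields a commuting square
\[
E_{S'}\circ\psi_* = \bigl((\bar\psi_1)_*\otimes(\bar\psi_2)_*\bigr)\circ E_S ,
\]
where $\psi_*$ is the bigrading-preserving automorphism of $\HFKh(J,P)$ induced by $\psi$. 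Thus, identifying everything with $\Hom(V,V')$, the element $E_{S'}(t_{D,P})$ is obtained from $E_S(t_{D,P})=F_\cC$ by pre- and post-composition with the isomorphisms $(\bar\psi_1)_*^{-1}$ and $(\bar\psi_2)_*$, followed by the conjugation-type operator $E_S\psi_*^{-1}E_S^{-1}$. Pre- and post-composition with isomorphisms preserve the rank of an element of $\Hom(V,V')$ and, being bigrading-preserving, preserve the refinements $\rk_{S,j}(D,i)$ and $\rk_S(D,i)$ as well. It remains to handle $\psi_*$: since $\psi$ lies in the identity component of $\Diff(S^3,J)$, its class in $\MCG(S^3,J,P)$ lies in the kernel of the forgetful map to $\MCG(S^3,J)$, which is generated by the basepoint-pushing (roll-spinning) map $r$ along $J$; hence $\psi_*=r_*^{\,k}=(\Id+\Phi\Psi)^k$ for some $k$ by Sarkar's formula. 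Under the connected sum isomorphism $E_S$ the operator $r_*$ corresponds to composition on $V$ and on $V'$ with the roll-spinning maps of the summands, so $E_S\psi_*^{-1}E_S^{-1}$ again acts on $\Hom(V,V')$ by left/right composition with isomorphisms, and therefore preserves all of the ranks, as desired.

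\textbf{Expected main obstacle.} The crux is the interplay between the two uniqueness issues: obtaining Schubert's uniqueness in exactly the form needed (with $\psi$ respecting the labelling of the two sides, which forces the ``prime'' and ``$K'\neq -K$'' hypotheses and distinguishes this case from $-K\#K$), and then controlling the residual basepoint-rolling ambiguity in $\psi$ — that is, checking that the roll-spinning map of the composite knot is carried by the connected sum formula to composition with roll-spinning maps of the summands, so that it acts on $\Hom(V,V')$ by left/right multiplication rather than as an arbitrary grading-preserving automorphism. The latter is a naturality statement for Sarkar's basepoint-moving map with respect to the connected sum decomposition of knot Floer homology.
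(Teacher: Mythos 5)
Your overall strategy matches the paper's: invoke Schubert's uniqueness of the prime decomposition to reduce to the case where the two connected sum spheres $S$ and $S'$ differ by an ambient isotopy of $(S^3,-K\#K')$, use $K'\neq -K$ to rule out swapping the two sides, and then control the residual ambiguity. Your identification of where ``prime'' and ``$K'\neq -K$'' enter is also correct and parallels the paper.

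Where you diverge is in the treatment of the residual ambiguity, and here there is a genuine gap. The paper observes that the ambient isotopy carrying $S'$ to $S$ moves the decoration $P$ around $-K\#K'$ a number of times, and translates this into winding the decoration $\sigma$ around the boundary component $-K\#K'$ of $F'$ in the connected sum cobordism $(W',\cF')$. It then absorbs this winding into the arbitrary decoration $\nu$ on the concordance $C$: for a suitable $\nu'$, the decorations $\sigma\cup\nu$ and $\sigma'\cup\nu'$ are isotopic rel boundary in $C\cup F'$, and since $t_{D,P}$ is independent of $\nu$, the element $E(t_{D,P})$ itself is unchanged. This is a purely decoration-pushing argument. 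By contrast, you funnel the ambiguity through $\MCG(S^3,J,P)$, assert that the kernel of the forgetful map to $\MCG(S^3,J)$ is generated by the roll-spinning map $r$, invoke Sarkar's formula $r_*=\Id+\Phi\Psi$, and then rely on the unproved ``naturality'' claim that $E_S r_* E_S^{-1}$ acts on $V^*\otimes V'$ by pre/post composition with roll-spinnings of the summands. You flag this last claim as the main obstacle, and rightly so: unwinding it amounts to precisely the decoration-shuffling argument the paper carries out, so you have not avoided the hard step, only relabelled it. Moreover, the statement about the generator of the kernel of $\MCG(S^3,J,P)\to\MCG(S^3,J)$ needs justification, and Sarkar's explicit formula is not actually used anywhere in your argument — you only need that $\psi_*$ is realized by something acting by composition under $E_S$, which is what requires proof. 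The paper's route is both shorter and stronger (it proves invariance of the element $E(t_{D,P})$ under the main ambiguity, not just of its rank), and it reserves the rank-only observation for the further ambiguity of winding $\sigma$ around the boundary components $-K$ and $K'$, which is the content of its closing remark and corresponds to your factor $(\bar\psi_1)_*\otimes(\bar\psi_2)_*$.
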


In this case, we write $\rk(D)$, $\rk_j(D, i)$, and $\rk(D, i)$ for $\rk_S(D)$, $\rk_{S, j}(D, i)$,
and $\rk_S(D, i)$, respectively. These are clearly invariants of $D$ up to stable diffeomorphism.

\begin{proof}
  Since $K$ and $K'$ are prime, there is an isotopically unique connected sum sphere $S$ for $-K \# K'$
  by the uniqueness of prime factorization of knots. Indeed, if $S'$ is another connected sum sphere,
  then we can make it transverse to $S$, and remove innermost components of $S^3 \setminus (S \cup S')$
  by isotopies through connected sum spheres until $S$ and $S'$ become parallel.
  Furthermore, as $K' \neq -K$ and $K'$ has to lie on the positive side of both $S$ and $S'$,
  the spheres $S$ and $S'$ are oriented coherently.

  As $-K \# K'$ is a decorated knot, the ambient isotopy taking $S'$ to $S$ might move the decoration $P$
  around $-K \# K'$ a number of times. We now show that the element $E(t_{D, P}) \in \Hom(V, V')$
  is invariant under moving $P$ around $K \# K'$.
  This corresponds to winding $\sigma$ around the boundary component $-K \# K'$ of the surface $F'$
  in the connected sum cobordism $(W', \cF')$.
  Recall that $t_{D, P}$ is defined by removing a ball from $D^4$ about a point of $D$,
  choosing a decoration $\nu$ on the resulting concordance $C$ from $(U, P_U)$ to $(-K \# K', P)$,
  and setting $t_{D, P} = F_{(C, \nu)}(1)$. By the functoriality of the decorated link cobordism maps
  in link Floer homology,
  \[
  E(t_{D, P}) = F_{(W', \cF')} \circ F_{(C, \nu)}(1) = F_{(W', \cF') \circ (C, \nu)}(1);
  \]
  see Figure~\ref{fig:2}.
  Let $\sigma'$ be another decoration on $F'$ that differs from $\sigma$ by winding it around $-K \# K'$.
  Then, for a suitable choice of decoration $\nu'$ on $C$, the decorations $\sigma \cup \nu$
  and $\sigma' \cup \nu'$ are isotopic in $C \cup F'$ relative to their boundaries. As $t_{D, P}$ is independent
  of the choice of $\nu$, the result follows.

  We remark that we could even wind $\sigma$ around the boundary components $-K$ and $K'$ of $F'$
  without changing the rank of $D$. Indeed, the induced map would act on the two factors of $V^* \otimes V'$
  by basepoint moving automorphisms of $V^*$ and $V'$, which preserves the rank of each element.
\end{proof}

Recall that a concordance is invertible if it has a left inverse in the cobordism
category of links; see Sumners~\cite{Sumners}.

\begin{cor}\label{cor:rank}
  Let $(K, P)$ and $(K', P')$ be decorated, prime, slice knots in $S^3$, with slice disks $D$ and $D'$,
  such that $K' \neq -K$ and $K \neq U$. Furthermore, let $\cC$ be an invertible concordance from $K$ to $K'$.
  Then the slice disk $D_\cC$ of $-K \# K'$ and the boundary connected sum $-D \natural D'$ have different ranks,
  and are hence not stably diffeomorphic.
\end{cor}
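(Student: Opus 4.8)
The plan is to compute the two ranks separately and observe that they disagree. The key preliminary remark is that, since $K$ and $K'$ are prime and $K' \ne -K$, Lemma~\ref{lem:composite} applies, so $\rk(-)$ is a genuine stable diffeomorphism invariant of slice disks of $-K \# K'$, independent of the connected sum sphere used to define it; hence it suffices to pick one convenient connected sum sphere for each of the two disks and compute there.

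For the boundary connected sum, I would observe that $-D \natural D'$ is a slice disk of $-K \# K'$ that splits as a boundary connected sum of the slice disk $-D$ of $-K$ and the slice disk $D'$ of $K'$, with splitting sphere the evident connected sum sphere $S$. Corollary~\ref{cor:split} then gives that $t_{-D \natural D'}$ has rank one, i.e.\ $\rk_S(-D \natural D') = 1$, and therefore $\rk(-D \natural D') = 1$ by Lemma~\ref{lem:composite}.

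For $D_\cC$, I would fix a decoration on $\cC$ compatible with $P$ and $P'$ of the form in Definition~\ref{def:conc}, set $V = \HFKh(K, P)$ and $V' = \HFKh(K', P')$, and apply Theorem~\ref{thm:concordance}: with respect to the identification $E$ coming from the belt sphere of the connected sum $1$-handle, $t_{D_\cC, P}$ corresponds to $F_\cC \in \Hom(V, V')$, so that $\rk(D_\cC) = \rk(F_\cC)$. Next I would use invertibility of $\cC$: a left-inverse concordance $\bar\cC$ gives $F_{\bar\cC} \circ F_\cC = F_{\bar\cC \circ \cC}$, and $\bar\cC \circ \cC$ is the product concordance, whose cobordism map is an isomorphism; hence $F_\cC$ is injective (cf.\ \cite{JMConcordance}) and $\rk(F_\cC) = \dim_{\bF_2} V$. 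Finally, since knot Floer homology detects the unknot and $K \ne U$, one has $\dim_{\bF_2} V \ge 2$. Putting this together,
\[
\rk(D_\cC) = \dim_{\bF_2}\HFKh(K, P) \ge 2 > 1 = \rk(-D \natural D'),
\]
and since the rank is a stable diffeomorphism invariant, $D_\cC$ and $-D \natural D'$ are not stably diffeomorphic.

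I do not expect any genuinely hard step: the argument only assembles results already available, namely Theorem~\ref{thm:concordance} (identifying $t_{D_\cC}$ with $F_\cC$), Corollary~\ref{cor:split} (the split disk), the unknot-detection property of $\HFKh$ forcing $\dim_{\bF_2}\HFKh(K,P) \ge 2$, and the formal fact that invertibility of $\cC$ upgrades the known non-vanishing of $F_\cC$ to injectivity and hence to full rank. The points requiring care are, first, that one must know the rank is a well-defined (sphere-independent) stable diffeomorphism invariant --- which is exactly where the hypotheses that $K$, $K'$ are prime and $K' \ne -K$ enter, via Lemma~\ref{lem:composite}, and which also reconciles the belt sphere used in Theorem~\ref{thm:concordance} with the splitting sphere of the boundary connected sum used in Corollary~\ref{cor:split} --- and, second, that invertibility of $\cC$ is genuinely needed, since for a general concordance $F_\cC$ is only known to be non-vanishing and one would obtain merely $\rk(D_\cC) \ge 1$.
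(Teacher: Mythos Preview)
Your proposal is correct and follows essentially the same route as the paper's proof: both compute $\rk(-D \natural D') = 1$ via Corollary~\ref{cor:split}, identify $\rk(D_\cC)$ with $\rk(F_\cC)$ via Theorem~\ref{thm:concordance}, use invertibility of $\cC$ to deduce that $F_\cC$ is injective so $\rk(F_\cC) = \dim V$, and invoke unknot/genus detection to get $\dim V > 1$. You are slightly more explicit than the paper in spelling out the role of Lemma~\ref{lem:composite} in reconciling the two a priori different connected sum spheres, which is a good thing to make visible.
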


\begin{proof}
  By Corollary~\ref{cor:split}, we have $\rk(-D \natural D') = 1$. On the other hand, $t_{D_\cC}$
  corresponds to $F_\cC$ by Theorem~\ref{thm:concordance}. As $\cC$ is invertible, the map $F_\cC$
  is injective, and hence has rank $\dim(V)$; see \cite{JMConcordance}.
  But $K \neq U$, so $\dim(V) > 1$ by the genus detection of knot Floer homology.
  It follows that $\rk(D_\cC) > 1$, and hence $D_\cC$ and $-D \natural D'$ are not stably diffeomorphic.
\end{proof}

For example, in the above result, we could take $K = K'$
to be any nontrivial, prime, achiral, slice knot, and $\cC = I \times K$,
in which case $D_\cC$ is obtained by spinning $K$. It follows that this spun
slice disk is not a boundary connected sum. Compare this with Example~\ref{ex:sum},
which shows that this fails when $K$ is composite.
Note that one can also use the fundamental group to show that $D_\cC$ in Corollary~\ref{cor:rank}
is not stably diffeomorphic to a boundary connected sum. The basic idea is that the map
$\pi_1(K) \to \pi_1(K) \ast_\mu \pi_1(K') \to \pi_1(D_{\cC})$ is injective as $\cC$ is invertible,
while the longitude of $K$ maps to zero under the composition
$\pi_1(K) \to \pi_1(K) \ast_\mu \pi_1(K') \to \pi_1(-D \natural D')$; cf.~Remark~\ref{rem:section} below.

More generally, the rank gives a lower bound on the complexity of certain sections of
the slice disk, in the following sense.

\begin{thm}\label{thm:section}
  Let $(K, P)$ and $(K', P')$ be decorated knots in $S^3$, and let $D$ be a slice disk
  of $(-K \# K', P)$ with connected sum sphere $S$. Suppose that there is a properly embedded
  3-ball $H$ in $D^4$ transverse to $D$, and such that $S^3 \cap H = S$.
  Consider the link $L$ in $S^3$ obtained by capping off $(H, H \cap D)$ with the trivial tangle $(D^3, D^1)$. Then
  \[
  \rk_S(D) \le \dim \left(\HFKh(L)\right).
  \]
  Analogous inequalities hold for $\rk_{S, j}(D, i)$ and $\rk_S(D, i)$. Hence, when $L$ is a knot,
  \[
  \max \{\, i \in \Z : \rk_S(D , i) \neq 0 \,\} \le g(L).
  \]
\end{thm}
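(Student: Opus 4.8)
The plan is to combine the identification $E(t_{D,P})=F_\cC$ of Theorem~\ref{thm:concordance} with a decomposition, induced by the $3$-ball $H$, of the cobordism that computes $E(t_{D,P})$ into two pieces joined along $L$. Recall first that, by definition, $\rk_S(D)=\rk\bigl(E(t_{D,P})\bigr)$ for the isomorphism $E\colon\HFKh(-K\#K',P)\to V^*\otimes V'\cong\Hom(V,V')$ coming from the inverse connected-sum cobordism $(W',\cF')$, and that $t_{D,P}=F_{(C,\nu)}(1)$, where $(C,\nu)$ is the concordance from $(U,P_U)$ to $(-K\#K',P)$ obtained by deleting a small ball from $(D^4,D)$ about an interior point of $D$ lying on the $-K$ side of $H$. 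Hence
\[
E(t_{D,P})=F_{(W',\cF')\circ(C,\nu)}(1)\in V^*\otimes V'.
\]

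The geometric core of the argument is the observation that $H$, together with the core $3$-ball of the $3$-handle in $W'$ — whose boundary is also the connected sum sphere $S$ — forms an embedded $3$-sphere $\mathbb{S}$ inside the underlying $4$-manifold of $(W',\cF')\circ(C,\nu)$, and that $\mathbb{S}$ meets the cobordism surface in exactly the link obtained by capping $(H,D\cap H)$ with the trivial tangle, that is, in $L$. The sphere $\mathbb{S}$ separates the $4$-manifold into a piece $\mathbb{W}_1$ containing the $U$-end and the component $-K$, and a piece $\mathbb{W}_2$ containing $K'$. Cutting the surface and its dividing set along $\mathbb{S}$ and choosing a decoration $P_L$ on $L$ — with the induced decorations on the two pieces chosen compatibly, which is possible because $t_{D,P}$ is independent of the auxiliary decoration $\nu$ and by a decoration-winding argument as in the proof of Lemma~\ref{lem:composite}; the number $\rk_S(D)$ does not depend on these choices — exhibits $\mathbb{W}_1$ as a decorated cobordism from $(U,P_U)$ to $(-S^3,-K,P)\sqcup(S^3,L,P_L)$ and $\mathbb{W}_2$ as a decorated cobordism from $(S^3,L,P_L)$ to $(S^3,K',P')$, with $(W',\cF')\circ(C,\nu)=\mathbb{W}_2\circ\mathbb{W}_1$. (We need not know that $\mathbb{W}_1,\mathbb{W}_2$ are glued from standard $4$-balls, since the decorated link cobordism maps of \cite{JCob} are defined for general cobordisms.)

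By functoriality of the link cobordism maps this gives
\[
E(t_{D,P})=F_{\mathbb{W}_2}\circ\tau,\qquad \tau:=F_{\mathbb{W}_1}(1)\in\Hom\bigl(V,\HFKh(L,P_L)\bigr),
\]
so that
\[
\rk_S(D)=\rk\bigl(F_{\mathbb{W}_2}\circ\tau\bigr)\le\rk(\tau)\le\dim\HFKh(L,P_L)=\dim\HFKh(L),
\]
which is the first inequality; the bounds for $\rk_{S,j}(D,i)$ and $\rk_S(D,i)$ follow in the same way, bounding the rank of the restriction of $E(t_{D,P})$ to a bigraded summand by the dimension of the bigraded summand of $\HFKh(L)$ into which $\tau$ sends it, using that $F_{\mathbb{W}_1}$ and $F_{\mathbb{W}_2}$ shift the bigrading homogeneously. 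When $L$ is a knot, the tangle $D\cap H$ is a single arc, since $D$ is a disk; cutting $D$ along this arc produces two disks, so $\mathbb{W}_1$ and $\mathbb{W}_2$ carry concordances once the unknot end of $\mathbb{W}_1$ is capped off, and $\tau$ and $F_{\mathbb{W}_2}$ therefore preserve the Alexander and Maslov gradings by \cite[Theorem~1.2]{JMConcordance}. Then $\rk_S(D,i)\le\dim\HFKh(L,i)$, so $\rk_S(D,i)\neq0$ forces $\HFKh(L,i)\neq0$, whence $i\le g(L)$ by the genus detection of knot Floer homology; therefore $\max\{\,i\in\Z:\rk_S(D,i)\neq0\,\}\le g(L)$.

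The main obstacle is the purely topological bookkeeping of the second paragraph: checking that an arbitrary properly embedded $3$-ball $H$ with $\partial H=S$ really produces the decomposition $\mathbb{W}_2\circ\mathbb{W}_1$, that the separating sphere $\mathbb{S}$ meets the slice disk in precisely the link $L$ of the statement, that the decorations can be matched across $\mathbb{S}$, and — for the graded statements — that the two pieces are concordances when $L$ is a knot. Once this is set up, the remaining input is formal: functoriality of the cobordism maps, the inequality $\rk(AB)\le\rk B$, and genus detection.
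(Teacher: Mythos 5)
Your proof is correct and rests on the same key idea as the paper's: factor the cobordism map computing $\rk_S(D)$ through $\HFKh(L)$ along the separating $3$-sphere that $H$ determines. The paper, however, carries out the factorization one step upstream and more economically: by Lemma~\ref{lem:sum}, $D = D_\cC$ for a decorated concordance $\cC = (C,\sigma)$ from $(K,P)$ to $(K',P')$ living inside $I\times S^3$; the $3$-sphere $H\cup D^3$ then sits in $I\times S^3$ and meets $C$ precisely in $L$; isotoping $\sigma$ so it meets each component of $L$ in two points exhibits $\cC$ as a composite of decorated link cobordisms through $(S^3,L)$; and Theorem~\ref{thm:concordance} immediately gives $\rk_S(D) = \rk(F_\cC) \le \dim\HFKh(L)$. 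You instead unwind $E(t_{D,P})$ to the composite $(W',\cF')\circ(C,\nu)$ and cut that along $H$ together with the $3$-handle core of $W'$, which forces you additionally to invoke the duality $\HFKh(-K,P)\cong V^*$ and the K\"unneth isomorphism for split links in order to regard $\tau = F_{\mathbb{W}_1}(1)$ as a linear map $V\to\HFKh(L)$. Both routes yield the same bound, but staying inside $I\times S^3$ makes the topological bookkeeping you flag as the ``main obstacle'' essentially trivial and sidesteps the K\"unneth/duality input entirely; on the other hand your version makes more explicit how the connected sum sphere $S$ (via $W'$) and the hyperplane $H$ fit together to produce the separating $S^3$. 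Your caution about the graded statements is reasonable: the paper glosses over the fact that when $L$ has several components the two pieces are link cobordisms rather than concordances, so one must appeal to the grading behaviour of the general cobordism maps rather than just \cite[Theorem~1.2]{JMConcordance}; your observation that the pieces are honest concordances when $L$ is a knot is a helpful check that the final genus inequality is clean.
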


\begin{proof}
  By Lemma~\ref{lem:sum}, there is a decorated concordance $\cC = (C, \sigma)$ in $I \times S^3$ such that $D = D_\cC$.
  By construction, $H \cup D^3$ is an embedded $S^3$ in $I \times S^3$ that intersects $C$ in the link $L$.
  We can isotope $\sigma$ on $C$ such that it intersects each component of $L$ in exactly two points.
  Hence, the link concordance $\cC$ factorizes through the decorated link $(H \cup D^3, L, \sigma \cap L)$.
  The result follows by applying the link Floer homology functor. The grading refined statement follows
  from the fact that the concordance maps preserve the Alexander and Maslov gradings.
  The last inequality holds since knot Floer homology detects the Seifert genus; see Ozsv\'ath--Szab\'o~\cite{genusbounds}.
\end{proof}

\begin{rem}\label{rem:section}
  As in Corollary~\ref{cor:rank}, we can consider the special case of Theorem~\ref{thm:section}
  when $D = D_\cC$ for an invertible concordance $\cC$. Then $F_\cC$ is injective, so we obtain that
  \[
  \rk_{S, j}(D, i) = \dim \left(\HFKh_j(K, i)\right).
  \]
  In particular, $g(K) \le g(L)$.
  However, the section of $\cC$ from $K$ to $L$ is also invertible, hence the genus inequality also follows from
  \cite[Corollary~1.7]{JMConcordance}, which can be shown without using knot Floer homology,
  instead invoking Gabai's higher genus generalization of Dehn's lemma \cite[Corollary~6.23]{Gabai}.
  When $L$ is the unknot, we recover the last part of Corollary~\ref{cor:rank}, stating that $D_\cC$ is not
  a boundary connected sum.
\end{rem}

\bibliographystyle{custom}
\bibliography{biblio}

\end{document}